\documentclass{article}

\usepackage[utf8]{inputenc}
\usepackage{color}
\usepackage{amssymb,amsmath,amscd,dsfont}
\newcommand{\op}[1]{\ensuremath{\operatorname{#1}}}
\newcommand{\wt}[1]{\ensuremath{\widetilde{#1}}}
\newcommand{\wh}[1]{\ensuremath{\widehat{#1}}}
\newcommand{\ol}[1]{\ensuremath{\overline{#1}}}
\newcommand{\cK}{\ensuremath{\mathcal{K}}}
\newcommand{\cZ}{\ensuremath{\mathcal{Z}}}
\newcommand{\fz}{\ensuremath{\mathfrak{z}}}
\newcommand{\fk}{\ensuremath{\mathfrak{k}}}
\newcommand{\fK}{\ensuremath{\mathfrak{K}}}

\newcommand{\fh}{\ensuremath{\mathfrak{h}}}
\newcommand{\cF}{\ensuremath{\mathcal{F}}}
\newcommand{\cS}{\ensuremath{\mathcal{S}}}
\newcommand{\mf}[1]{\ensuremath{\mathfrak{#1}}}
\newcommand{\dd}{\op{\tt{d}}}
\newcommand{\bD}{\ensuremath{\mathbb{D}}}
\newcommand{\R}{\ensuremath{\mathbb{R}}}
\newcommand{\Z}{\ensuremath{\mathbb{Z}}}
\newcommand{\Ad}{\ensuremath{\operatorname{Ad}}}
\newcommand{\ad}{\ensuremath{\operatorname{ad}}}
\newcommand{\supp}{\ensuremath{\operatorname{supp}}}
\newcommand{\Aut}{\ensuremath{\operatorname{Aut}}}
\newcommand{\GL}{\ensuremath{\operatorname{GL}}}
\newcommand{\der}{\ensuremath{\operatorname{der}}}
\newcommand{\Hom}{\ensuremath{\operatorname{Hom}}}

\newcommand{\se}{\ensuremath{\nobreak\subseteq\nobreak}}
\newcommand{\from}{\ensuremath{\nobreak\colon\nobreak}}
\renewcommand{\to}{\ensuremath{\nobreak\rightarrow\nobreak}}
\renewcommand{\phi}{\varphi}
\newcommand{\ld}{\ensuremath{\operatorname{\delta}}}
\usepackage{amssymb,amsmath,amscd}

\usepackage[amsmath,thmmarks]{ntheorem}
\theoremseparator{.}
\theoremsymbol{\rule{1ex}{1ex}}
\theorembodyfont{\upshape}
\newtheorem{definition}{Definition}[section]
\newtheorem{remark}[definition]{Remark}

\newtheorem*{proof}{Proof}

\theorembodyfont{\itshape}

\theoremsymbol{}
\newtheorem{lemma}[definition]{Lemma}
\newtheorem{proposition}[definition]{Proposition}
\newtheorem{theorem}[definition]{Theorem}
\newtheorem{corollary}[definition]{Corollary}
\newenvironment{tabsection}{}{}

\usepackage[a4paper,hscale=0.7,vscale=0.82,hmarginratio=1:1,includehead,headheight=14pt]{geometry}
\usepackage{fancyhdr}
\pagestyle{fancy}
\fancypagestyle{plain}{\fancyhead{}\fancyfoot[C]{\thepage}}

\fancyhead{}
\fancyfoot{}
\fancyhead[L]{\large\nouppercase \rightmark}
\fancyhead[R]{\large\thepage}

\usepackage[unicode]{hyperref}

\hypersetup{
    colorlinks = true,
	linkcolor = blue,
	citecolor = blue,
    linktocpage = true,
	pdftitle = {Universal Central Extensions of Gauge Algebras and Groups},
    pdfauthor = {Bas Janssens, Christoph Wockel},
    bookmarksopen = true,
    bookmarksopenlevel = 1,
	unicode = true,
    hypertexnames = false
  }

\usepackage[all,cmtip,line]{xy}
\CompileMatrices

\begin{document}

\title{Universal Central Extensions of Gauge Algebras and Groups}
\author{Bas Janssens\\[.5em]{\small University of Utrecht}\\{\small\texttt{mail@bjadres.nl}}\and
 		Christoph Wockel\\[.5em]{\small University of Hamburg}\\{\small\texttt{christoph@wockel.eu}}}
\maketitle

\begin{abstract}
 We show that the canonical central extension of the group of sections
 of a Lie group bundle over a compact manifold, constructed in 
\cite{NeebWockel07Central-extensions-of-groups-of-sections}, is universal.
 In doing so, we prove universality of the corresponding central
 extension of Lie algebras in a slightly more general setting.
\end{abstract}

\section{Setting of the Problem}

\begin{tabsection}
 Let $\fK \rightarrow M$ be a finite-dimensional, locally trivial bundle
 of Lie algebras. A cocycle on its Lie algebra of sections can then be
 constructed as follows. For any Lie algebra $\fk$, the derivations
 $\mathrm{d}\in\der(\fk)$ act naturally on its second symmetric tensor power
 $S^{2}(\fk)$ by
 $\mathrm{d} \cdot (x \otimes_{s} y) = \mathrm{d}(x) \otimes_s y + x \otimes_{s} \mathrm{d}(y)$,
 and we denote the quotient by
 $V(\fk) := S^{2}(\fk) /\langle\der(\fk) \cdot S^{2}(\fk)\rangle$.
 The symmetric bilinear form $\kappa \from \fk \times \fk \to V(\fk)$
 given by
 $$ \kappa(x,y) := [x \otimes_{s}y] $$ satisfies
 $\kappa(\mathrm{d}(x),y) + \kappa(x,\mathrm{d}(y)) = 0$ for all
 $\mathrm{d} \in \der(\fk)$ and is universal with this property. Now let
 $\nabla$ be a Lie connection on $\fK$, i.e.\ one that satisfies
 $\nabla [\xi,\eta] = [\nabla \xi , \eta] + [\xi, \nabla \eta]$ for all
 sections $\xi, \eta \in \Gamma(\fK)$. Then $\nabla$ induces a flat
 connection $\dd_{\nabla}$ on $V(\fK)$ by
 $\dd_{\nabla}[\xi \otimes_{s}\eta] = [\nabla\xi \otimes_{s}\eta] + [\xi \otimes_{s}\nabla\eta]$,
 where $V(\fK)$ is the vector bundle one obtains by applying
 $\fk\mapsto V(\fk)$ fibrewise. The connection $\dd_{\nabla}$ does not
 depend on $\nabla$, as any two Lie connections differ by a pointwise
 derivation, which acts trivially on $V(\fK)$. We therefore omit the
 subscript and simply write $\dd$. Using the identity
 $\dd \kappa(\xi,\eta) = \kappa(\nabla\xi,\eta) + \kappa(\xi,\nabla\eta)$
 and the compatibility of $\nabla$ with the Lie bracket, it is not hard
 to check that
 \begin{equation}\label{eqn:canonical-cocycle}
  \omega_{\nabla}\from \Gamma_{c}(\fK)\times \Gamma_{c}(\fK)\to
  \ol{\Omega}{}^{1}_{c}(M,V(\fK)),
  \quad (\eta,\xi)\mapsto 
  [\kappa(\eta,\nabla \xi)]
 \end{equation}
 defines a Lie algebra cocycle, where $\ol{\Omega}{}^{1}_{c}(M,V(\fK))$
 denotes $\Omega{}^{1}_{c}(M,V(\fK)) / \dd \Omega^{0}_{c}(M,V(\fK))$,
 and the subscript $c$ denotes compact support.

 The classes $[\omega_{\nabla}]$ in
 $H^{2}(\Gamma_{c}(\fK),\ol{\Omega}{}^{1}_{c}(M,V(\fK)))$ are naturally
 parameterised by the quotient
 $\Omega^{1}(M,\der(\fK)) / \Omega^{1}(M,\fK)$, as the Lie connections
 constitute an affine space over $\Omega^{1}(M,\der(\fK))$, and
 $\nabla - \nabla'= \ad_{\Xi}$ for some $\Xi \in \Omega^{1}(M,\fK)$
 implies that
 $(\omega_{\nabla} - \omega_{\nabla'})(\xi,\eta) = [\kappa(\xi,[\Xi,\eta])] = - [\kappa(\Xi,[\xi,\eta])]$
 is a coboundary. In particular, the class
 $[\omega_{\nabla}] = [\omega]$ is canonical if the fibres are
 semi-simple, because in that case $\der(\fK) = \fK$. We endow our
 spaces of smooth forms and sections with the usual LF-topology (cf.\
 \cite{Maier02Central-extensions-of-topological-current-algebras}), and
 denote the continuous linear maps and cohomologies by $\Hom_{\op{ct}}$
 and $H_{\op{ct}}^{*}$ respectively. The first result of this paper is
 now the following
 \begin{proposition}\label{prop:universal_algebra_cocycle}
  If $\fK$ is a finite-dimensional Lie algebra bundle with semi-simple
  fibres, then
  $[\omega]\in H_{\op{ct}}^{2}(\Gamma_{c}(\fK),\ol{\Omega}{}^{1}_{c}(M,V(\fK)))$
  is (weakly) universal, i.e.\ the map
  \begin{equation}\label{eqn:universal_algebra_cocycle}
   \Hom_{\op{ct}}(\ol{\Omega}{}^{1}_{c}(M,V(\fK)),X)\to 
   H_{\op{ct}}^{2}(\Gamma_{c}(\fK),X), \quad
   \varphi \mapsto [\varphi \op{\circ} \omega]   
  \end{equation}
  is bijective for each topological vector space $X$, considered as a
  trivial $\Gamma_{c}(\fK)$-module.
 \end{proposition}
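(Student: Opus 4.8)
The plan is to prove bijectivity of the map \eqref{eqn:universal_algebra_cocycle} by first reducing, via a partition of unity, to trivialising charts, and then invoking the classical computation of second (continuous) cohomology for current algebras with semi-simple coefficients. As a preliminary step I would record that $\Gamma_c(\fK)$ is topologically perfect: since semi-simple fibres are perfect — and, crucially, every element of a semi-simple Lie algebra is a finite sum of brackets depending linearly on it — one can multiply such bracket decompositions by bump functions to obtain $\Gamma_c(\fK) = [\Gamma_c(\fK),\Gamma_c(\fK)]$. Perfectness is what makes the formalism of (weakly) universal cocycles applicable, and in particular it reduces the \emph{injectivity} of \eqref{eqn:universal_algebra_cocycle} to the statement that the linear span of $\{\omega(\xi,\eta) : \xi,\eta \in \Gamma_c(\fK)\}$ is dense in $\ol{\Omega}{}^{1}_{c}(M,V(\fK))$. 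Over a trivialising chart $U$ one has $\omega(fx,gy) = [f\,\dd g]\cdot\kappa(x,y)$ with $\kappa$ surjective onto $V(\mf k)$ and $\{[f\,\dd g]\}$ spanning a dense subspace of $\ol{\Omega}{}^{1}_{c}(U)$, so a partition of unity argument yields the density claim, hence injectivity.

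For \emph{surjectivity}, let $\eta_0$ be a continuous $2$-cocycle on $\Gamma_c(\fK)$ with values in the trivial module $X$. The first structural observation is that $\eta_0$ may be taken to be \emph{local}: evaluating the cocycle identity on triples of sections with pairwise disjoint supports (where all brackets vanish) and using perfectness of $\Gamma_c(\fK|_W)$ for small open $W$, one shows that $\eta_0$ vanishes on pairs of sections with disjoint support, hence is given by a continuous bilinear differential operator in the two arguments. Over a trivialising chart $U\cong\R^{n}$ we then have $\Gamma_c(\fK|_U)\cong C^{\infty}_{c}(U)\otimes\mf k$, and the classification of local $2$-cocycles on such current algebras — in which semi-simplicity of $\mf k$ is exactly what forces the space of invariant bilinear expressions to be $V(\mf k)$ — shows that $\eta_0|_{\Gamma_c(\fK|_U)}$ is cohomologous to $\varphi_U\op{\circ}(\omega|_U)$ for a unique continuous linear $\varphi_U\from\ol{\Omega}{}^{1}_{c}(U,V(\fK))\to X$; here I would cite Maier's computation \cite{Maier02Central-extensions-of-topological-current-algebras} for the trivial-bundle case.

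Because the flat connection $\dd$ on $V(\fK)$, and with it the cocycle $\omega$, does not depend on the chosen Lie connection $\nabla$, the operators $\varphi_U$ attached to different charts must agree on the overlaps, so the local primitives and local identifications assemble, via the partition of unity, into a global continuous linear $\varphi\from\ol{\Omega}{}^{1}_{c}(M,V(\fK))\to X$ with $[\eta_0]=[\varphi\op{\circ}\omega]$. Uniqueness of $\varphi$ is precisely the density statement established above, so \eqref{eqn:universal_algebra_cocycle} is bijective.

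The step I expect to be the main obstacle is the globalisation: making the localisation of $\eta_0$ precise (controlling the cross terms $\eta_0(\rho_i\xi,\rho_j\eta)$ produced by a partition of unity and the coboundaries needed to absorb them), and then verifying that the chart-wise maps $\varphi_U$ glue coherently through the transition functions of $\fK$ — this is where the canonical, flat nature of $\dd$ on $V(\fK)$ is indispensable — all while keeping continuity in the LF-topology under control and allowing an arbitrary coefficient space $X$ rather than just $\R$. The genuinely local input, namely the current-algebra computation with semi-simple coefficients, is classical; the new work lies entirely in this patching and in the bookkeeping for general $X$.
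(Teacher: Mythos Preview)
Your plan is essentially the paper's argument: perfectness of $\Gamma_c(\fK|_U)$ forces continuous cocycles to be diagonal, Maier's current-algebra computation gives the local isomorphism, and then one globalises.  The paper packages the globalisation abstractly rather than by hand: it shows that $U\mapsto \Hom_{\op{ct}}(\ol{\Omega}{}^{1}_{c}(U,V(\fK)),X)$ is a \emph{sheaf} (by proving $U\mapsto\ol{\Omega}{}^{1}_{c}(U,V(\fK))$ is a cosheaf) and that $U\mapsto H^{2}_{\op{ct}}(\Gamma_{c}(\fK|_{U}),X)$ is a \emph{monopresheaf} (the partition-of-unity identity $\beta(\chi)=\sum_i\beta_i(\lambda_i\chi)$, which is exactly your ``cross-term'' bookkeeping), and then applies a one-line lemma saying that a presheaf morphism from a sheaf to a monopresheaf which is locally an isomorphism is globally an isomorphism.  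This handles injectivity and surjectivity in one stroke and makes the gluing of the $\varphi_U$ automatic.

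Two small corrections to your outline.  First, density of the span of $\{\omega(\xi,\eta)\}$ does \emph{not} by itself give injectivity of \eqref{eqn:universal_algebra_cocycle}: if $\varphi\circ\omega=\delta\beta$ with $\beta\neq0$, density only shows $\varphi$ is determined by $\beta$, not that $\varphi=0$; what one actually needs is (topological) perfectness of the \emph{extension} $\ol{\Omega}{}^{1}_{c}\oplus_\omega\Gamma_c(\fK)$, and the paper avoids the issue altogether by not separating injectivity from surjectivity.  Second, the reason the $\varphi_U$ agree on overlaps is the \emph{uniqueness} in Maier's local statement (i.e.\ injectivity of $\mu_{U\cap V}$), not the $\nabla$-independence of $\dd$ on $V(\fK)$; the latter is only what makes $\omega$ and the target space canonical in the first place.
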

 Note that by Proposition \ref{prop:supportDecreasing} and \cite[Lem.\
 1.12]{Neeb02Universal-central-extensions-of-Lie-groups} the associated
 central extension of locally convex Lie algebras  is also universal in
 the stronger sense of
 \cite{Neeb02Universal-central-extensions-of-Lie-groups}.

 The proof of the previous proposition will be carried out in Section
 \ref{sect:universality_of_the_lie_algebra_cocycle}. Note that for
 semi-simple Lie algebras, $\kappa$ is the universal symmetric invariant
 bilinear form \cite[App.\
 B]{NeebWockel07Central-extensions-of-groups-of-sections}. It is equal
 to the Killing form for all central simple real Lie algebras, but not
 for e.g.\ the real simple Lie algebras $\mathfrak{sl}_{n}(\mathbb{C})$.
 The motivating example for the above proposition is the gauge algebra
 $\Gamma(\ad(P))$ of a principal fibre bundle $P\to M$, whence the title
 of this paper.

 We now formulate the corresponding result for Lie groups rather than
 Lie algebras. Let $\cK \to M$ be a finite-dimensional, locally trivial
 bundle of Lie groups, and set $\fK := L(\cK)$. Unlike in the case of
 Lie algebras, we will assume the base manifold $M$ to be compact and
 connected. Consequently, the fibres of $\cK$ will all be isomorphic to
 a single Lie group $K$ with Lie algebra $\fk$. We will assume that
 $\fk$  is semi-simple, and that $\pi_0(K)$ is finitely generated.
 The latter implies that $\Aut(K)$ carries a natural Lie group structure
 \cite[Ch.\ III, \S
 10]{Bourbaki98Lie-groups-and-Lie-algebras.-Chapters-1--3} modelled on
 $\fk = \der(\fk)$. We may thus consider $\cK$ to be associated to its
 principal $\Aut(K)$-frame bundle
 $\op{Fr}(\cK):=\cup_{x\in M}\op{Iso}(\cK_{x},K)$, placing us in the
 setting considered in
 \cite{NeebWockel07Central-extensions-of-groups-of-sections}. The group
 $\Gamma(\cK)$ of smooth sections then carries naturally the structure
 of a Fr\'echet-Lie group with Lie algebra $\Gamma(\fK)$, see \cite[App.
 A]{NeebWockel07Central-extensions-of-groups-of-sections}. The connected
 component $\Gamma(\cK)_{0}$ possesses a central extension, which we
 will show to be universal. It is constructed as follows.

 First we have to ensure a technical condition in order to use the more
 detailed results from
 \cite{NeebWockel07Central-extensions-of-groups-of-sections}. For this
 we consider the homomorphism $\rho\from \pi_{1}(M)\to \GL(V(\fk))$,
 given by composing the connecting homomorphism
 $\delta\from \pi_{1}(M)\to \pi_{0}(\Aut(K))$ of the fibration
 $\op{Fr}(\cK)\to M$ with the natural representation
 $\pi_{0}(\Aut(K))\to \GL(V(\fk))$. ($\Aut(K)$ acts naturally on
 $V(\fk)$, and $\Aut(K)_{0}$ acts trivially because $\der(\fk)$ does
 so.)

 If now $\bD_{\cK}:= \rho(\pi_{1}(M))$ is finite, then by \cite[Cor.\
 4.18]{NeebWockel07Central-extensions-of-groups-of-sections} the period
 group $\Pi_{\omega}$ associated to $\omega$ is discrete, and by
 \cite[Th.\
 7.9]{Neeb02Central-extensions-of-infinite-dimensional-Lie-groups} there
 exists a central extension
 \begin{equation*}
  \ol{\Omega}{}^{1}(M,V(\fK))/\Pi_{\omega}\to\wh{\Gamma(\cK) _{0}}\xrightarrow{{q}} \wt{\Gamma(\cK) _{0}}
 \end{equation*}
 (i.e.\  a short exact sequence of locally convex Lie groups with
 central kernel that is a smooth principal bundle, cf.\
 \cite{Neeb02Central-extensions-of-infinite-dimensional-Lie-groups})
 where $c\from\wt{\Gamma(\cK)_{0}}\to {\Gamma(\cK)_{0}}$ denotes the
 simply connected cover. In order so see that $c \op{\circ} q$ also
 defines a central extension we observe that $\Gamma(\cK)_{0}$ is a
 covering group of $\Gamma(\Ad(\op{Fr}(\cK)))_{0}$. In fact, the exact
 sequence $Z(K)\to K\to \op{Inn}(K)\se\Aut(K)$ induces a (fibrewise)
 exact sequence
 \begin{equation*}
  \cZ(\cK) \to \cK \to \mathcal{I}nn(\cK),
 \end{equation*}
 which leads to a covering
 $\Gamma(\cZ(\cK))\to \Gamma(\cK)_{0}\to \Gamma(\mathcal{I}nn(\cK))_{0}$
 ($\Gamma(\cZ(\cK))$  is discrete since $Z(K)$ is so). Note that
 $\mathcal{I}nn(\cK)$ is obtained from $\cK$ by a push-forward along the
 morphism $K\to \op{Inn}(K)$ of $\Aut(K)$-spaces. Since $K$ is
 semi-simple, we have that $\op{Inn}(K)$ is an open subgroup of
 $\Aut(K)$, and thus
 $\Gamma(\Ad(\op{Fr}(\cK)))_{0}=\Gamma(\mathcal{I}nn(\cK))_{0}$. It has
 been shown in \cite[Cor.\
 22]{NeebWockel07Central-extensions-of-groups-of-sections} that the
 adjoint action of $\Gamma(\ad(\op{Fr}(\cK)))$ integrates to an action
 of $\Gamma(\Ad(\op{Fr}(\cK)))_{0}$, and since
 $\Gamma(\cK)_{0}\to \Gamma(\Ad(\op{Fr}(\cK)))_{0}$ is a covering, the
 adjoint action of $\Gamma(\fK)\cong \Gamma(\ad(\op{Fr}(\cK)))$
 integrates to an action of $\Gamma(\cK)_{0}$ on
 $\ol{\Omega}{}^{1}(M,V(\fK))\oplus_{\omega}\Gamma(\fK)$. From
 \cite[Rem.\
 7.14]{Neeb02Central-extensions-of-infinite-dimensional-Lie-groups} it
 now follows that
 \begin{equation}\label{eqn:centrExtGaugeGroup}
  Z\to\wh{\Gamma(\cK) _{0}}
  \xrightarrow{ c \op{\circ} q} {\Gamma(\cK) _{0}}
 \end{equation}
 with
 $Z:=\ker(c \op{\circ} q)\cong (\ol{\Omega}^{1}_{c}(M,V(\fK))/\Pi_{\omega})\times \pi_{1}(\Gamma(\cK)_{0})$
 is a central extension, which turns out to be universal:

 \begin{theorem}\label{thm:universal_group_extension}
  Let $\cK \to M$ be a finite-dimensional Lie group bundle over a
  compact and connected manifold $M$, such that its typical fibre $K$ is
  semi-simple and has finitely generated $\pi_{0}(K)$. If, moreover,
  $\bD_{\cK}$ is finite, then the central extension
  \eqref{eqn:centrExtGaugeGroup} is universal for abelian Lie groups
  modelled on Mackey-complete locally convex spaces.
 \end{theorem}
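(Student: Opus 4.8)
The plan is to reduce the universality of the group extension to the universality of the Lie algebra cocycle $[\omega]$ established in Proposition~\ref{prop:universal_algebra_cocycle}, using the general Lie-theoretic machinery of \cite{Neeb02Central-extensions-of-infinite-dimensional-Lie-groups} and \cite{Neeb02Universal-central-extensions-of-Lie-groups}. First I would fix an arbitrary central extension $Z' \to \wh{G} \xrightarrow{\pi} \Gamma(\cK)_0 =: G$ with $Z'$ an abelian Lie group modelled on a Mackey-complete locally convex space, and show it is the pushout of \eqref{eqn:centrExtGaugeGroup} along a unique morphism. On the level of Lie algebras, $\wh{G}$ gives rise to a central extension of $\Gamma(\fK)$ by $L(Z')$; since $M$ is compact, $\Gamma(\fK) = \Gamma_c(\fK)$, so Proposition~\ref{prop:universal_algebra_cocycle} (together with Proposition~\ref{prop:supportDecreasing} and \cite[Lem.~1.12]{Neeb02Universal-central-extensions-of-Lie-groups}, which upgrade weak universality to universality of the associated Lie algebra extension) produces a unique continuous $L(Z')$-valued linear functional on $\ol{\Omega}{}^1(M,V(\fK))$ pulling $\omega$ back to the given algebra cocycle, hence a unique morphism of topological Lie algebras from $\ol{\Omega}{}^1(M,V(\fK)) \oplus_\omega \Gamma(\fK)$ to $L(\wh G)$.

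The core of the argument is then to integrate this Lie algebra morphism to a morphism of Lie groups $\wh{\Gamma(\cK)_0} \to \wh G$ covering the identity on $G$. Here I would invoke the universal property built into the construction of \eqref{eqn:centrExtGaugeGroup}: by \cite[Rem.~7.14]{Neeb02Central-extensions-of-infinite-dimensional-Lie-groups}, $\wh{\Gamma(\cK)_0}$ is assembled from the simply connected cover $\wt{G}$ and the period/flux data of $\omega$, and any Lie algebra morphism whose restriction to the centre kills the period group $\Pi_\omega$ and is compatible with $\pi_1(G)$ integrates. So the two things to check are (i) the period homomorphism $\pi_1(G) \to L(Z')$ induced by the integrated cocycle factors appropriately — this uses that $\wh G$ is an honest Lie group extension, so its own period group is trivial in the relevant sense, forcing the periods of our functional-composed-with-$\omega$ to lie in $\ker$ of the quotient $Z \to Z'$ — and (ii) the flux homomorphism $\pi_1(G) \to \ldots$ vanishes, which again follows because $\wh G$ exists as a smooth bundle. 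Discreteness of $\Pi_\omega$ — guaranteed by the hypothesis that $\bD_{\cK}$ is finite, via \cite[Cor.~4.18]{NeebWockel07Central-extensions-of-groups-of-sections} — is what makes all of this well-posed.

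Uniqueness of the integrated morphism follows from connectedness of $G$ together with uniqueness of the Lie algebra morphism: two group morphisms covering $\mathrm{id}_G$ and inducing the same map on Lie algebras differ by a homomorphism $G \to Z'$ that is trivial on $\pi_1$ and on $L(G)$, hence trivial. Finally I would record that the kernel identification $Z \cong (\ol{\Omega}{}^1(M,V(\fK))/\Pi_\omega) \times \pi_1(G)$ from the excerpt, combined with the universal property just proved, shows \eqref{eqn:centrExtGaugeGroup} is the universal (i.e.\ initial) object among central extensions of $G$ by abelian Lie groups of the stated type.

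I expect the main obstacle to be the integration step (ii)–(i): controlling the period and flux homomorphisms of the Lie algebra morphism $\ol{\Omega}{}^1(M,V(\fK)) \oplus_\omega \Gamma(\fK) \to L(\wh G)$ precisely enough to conclude it integrates through $\wh{\Gamma(\cK)_0}$ rather than merely through the simply connected cover $\wt G$. This is where the finiteness of $\bD_{\cK}$ and the detailed computation of $\pi_1(\Gamma(\cK)_0)$ and $\Pi_\omega$ from \cite{NeebWockel07Central-extensions-of-groups-of-sections} must be brought to bear, and where the Mackey-completeness hypothesis on the model space of $Z'$ enters (it is needed so that the relevant integrals defining the group cocycle converge and so that \cite[Lem.~1.12]{Neeb02Universal-central-extensions-of-Lie-groups} applies).
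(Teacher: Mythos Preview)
Your overall strategy matches the paper's: reduce to the universality of the Lie algebra extension (Proposition~\ref{prop:universal_algebra_cocycle} plus \cite[Lem.~1.12]{Neeb02Universal-central-extensions-of-Lie-groups}), then integrate. The difference is in the integration step. The paper does not track period and flux obstructions by hand; it invokes a black-box Recognition Theorem \cite[Th.~4.13]{Neeb02Universal-central-extensions-of-Lie-groups}: for connected $H$ with perfect Lie algebra and $L(Z)$ Mackey complete, a central extension $Z \to \wh{H} \to H$ is universal once the derived Lie algebra extension is universal and $\wh{H}$ is simply connected. The only remaining check is therefore $\pi_1(\wh{\Gamma(\cK)_0}) = 0$, and this is done in three lines via the long exact homotopy sequence of $Z \to \wh{\Gamma(\cK)_0} \to \Gamma(\cK)_0$: the connecting map $\pi_2(\Gamma(\cK)_0) \to \pi_1(Z)$ is surjective by \cite[Prop.~5.11]{Neeb02Central-extensions-of-infinite-dimensional-Lie-groups}, and $\pi_1(\Gamma(\cK)_0) \to \pi_0(Z)$ is an isomorphism by the very definition of $Z$. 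Simple connectedness of the total space makes any Lie algebra morphism out of it integrate, so your period/flux analysis becomes unnecessary.

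Your route amounts to reproving the Recognition Theorem in this instance and can in principle be carried through, but as written there are imprecisions you would need to fix: the period map of a $2$-cocycle is defined on $\pi_2(G)$, not on $\pi_1(G)$; the flux takes values in $H^1_{\op{ct}}(\Gamma(\fK),\fz)$, which already vanishes here because $\Gamma(\fK)$ is perfect; and the obstruction to integrating a Lie algebra morphism \emph{out of} $\wh{\Gamma(\cK)_0}$ is governed by $\pi_1(\wh{\Gamma(\cK)_0})$ rather than by $\pi_1(G)$. Once these are straightened out, the substantive content of your conditions (i)--(ii) is exactly the simple-connectedness statement the paper isolates; recognising that shortcut would save you the hands-on work.
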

 This means that for each central extension
 $A\to \wh{G}\to \Gamma(\cK) _{0}$ of locally convex Lie groups that is
 a smooth principal bundle and such that $L(A)$ is Mackey-complete,
 there exists a unique morphism
 $\varphi\from \wh{\Gamma(\cK)}_{0} \to \wh{G}$ such that the diagram
 \begin{equation*}
  \vcenter{\xymatrix{
  Z \ar[r]\ar[d]^{\left.\varphi\right|_{Z}} & \wh{\Gamma(\cK)}_{0} \ar[r]\ar[d]^{\varphi} &
  \Gamma(\cK) _{0}\ar@{=}[d]\\
  A \ar[r] & \wh{G} \ar[r] & \Gamma(\cK)_{0}
  }}
 \end{equation*}
 commutes.
\end{tabsection}

\begin{tabsection}
 In general it is not easy to decide whether the group $\bD_{\cK}$ is finite.
 In order to make the above theorem applicable, we list here some conditions
 that ensure the finiteness of $\bD_{\cK}$.
 \begin{itemize}
  \item If the image of the connecting homomorphism
        $\delta\from \pi_{1}(M)\to \pi_{0}(\Aut(K))$ of the fibration
        $\op{Fr}(\cK)\to M$ is finite, then $\bD_{\cK}$ is finite. This is in
        particular the case if $\cK$ is trivial, reproducing the universal
        central extension of $\Gamma(\cK)_{0}\cong C^{\infty}(M,K)_{0}$ of
        \cite{MaierNeeb03Central-extensions-of-current-groups}.
  \item If $\fk$ is compact and simple (or, more generally, central simple
        real), then the Killing form is universal and $\Aut(K)$-invariant. Thus
        the homomorphism $\pi_{0}(\Aut(K))\to \GL(V(\fk))$ vanishes, so that
        $\bD_{\cK}$ and $V(\fK)$ are trivial.
  \item If $\pi_{0}(K)$ and $Z(K_0)$ are both finite, then so is
        $\pi_{0}(\Aut(K))$, and therefore $\bD_{\cK}$. This can be seen as
        follows. The image of the natural homomorphism
        $\Aut(K) \rightarrow \Aut(\fk) \times \Aut(\pi_0(K))$ is both open and
        closed, and its kernel is naturally identified with the group of
        crossed homomorphisms from $\pi_0(K)$ to $Z(K_0)$. Since both the
        kernel and the image have finitely many connected components (the group
        $\pi_{0}(\Aut(\fk))$ is finite, see
        \cite[Thm.~4]{Whitney57Elementary-structure-of-real-algebraic-varieties}
        or \cite{Gundogan10The-component-group-of-the-automorphism-group-of-a-simple-Lie-algebra-and-the-splitting-of-the-corresponding-short-exact-sequence}),
        $\Aut(K)$ must have finitely many components as well.
 \end{itemize}
 
 Note that the concepts of universality and weak universality of cocycles and
 central extensions that are used in our main references
 \cite{Neeb02Universal-central-extensions-of-Lie-groups} and
 \cite{Maier02Central-extensions-of-topological-current-algebras} differ
 slightly, but coincide in the case of \emph{perfect} Lie algebras. We will see
 in the next section that $\Gamma(\fK)$ is in fact perfect. We will give
 precise references for the equality of these concepts at each stage where an
 ambiguity might occur.
 
 Our results prove the universality of the above central extension of gauge
 algebras which has already been claimed at some places in the literature, see
 for instance
 \cite{LosevMooreNekrasovShatashvili98Central-extensions-of-gauge-groups-revisited}
 or \cite{MohrdieckWendt04Integral-conjugacy-classes-of-compact-Lie-groups}.
 Note also that a common mistake is made in some treatments of this subject by
 considering the Killing form as universal invariant bilinear form $\kappa$,
 which is not always justified (see above).
\end{tabsection}

\section{Universality of the Lie Algebra Cocycle}
\label{sect:universality_of_the_lie_algebra_cocycle}

\begin{tabsection}
 This section is devoted to the proof of Proposition
 \ref{prop:universal_algebra_cocycle}, in which a pivotal role will be
 played by the fact that our Lie algebra cocycles are essentially local in
 nature. We will state and prove this in a slightly more general setting
 where the fibres of our Lie algebra bundle are allowed to be
 infinite-dimensional. More precisely, we will require $\fK \to M$ to be
 a locally trivial bundle of locally convex topological Lie algebras,
 with a base $M$ that is finite-dimensional but not necessarily compact.
 We then equip its space of compactly supported sections
 $\Gamma_{c}(\fK) = {\displaystyle \lim_{\longrightarrow}}\, \Gamma_{K}(\fK)$
 with the usual inductive limit topology.

 Throughout this section, $X$ will denote an arbitrary topological
 vector space, considered as a trivial module for the Lie algebra in
 question.
\end{tabsection}

\begin{definition}
 A (not necessarily continuous) 2-cocycle on $\Gamma_{c}(\fK)$ with
 values in $X$ is called \emph{diagonal} if $\psi(\eta,\xi)=0$ whenever
 $\supp(\eta)\cap \supp(\xi)=\emptyset$.
\end{definition}

\begin{tabsection}
 Combined with Proposition \ref{prop:supportDecreasing}, the following
 Lemma shows that all continuous 2-cocycles are diagonal if the fibres
 of $\fK$ are topologically perfect, and that all 2-cocycles (also the
 non-continuous ones) on $\Gamma_{c}(\fK)$ are diagonal if the fibres of
 $\fK$ are finite-dimensional and perfect.
\end{tabsection}

\begin{lemma}\label{lem:perfectness}
 If $\Gamma_{c}(\left.\fK\right|_{U})$ is topologically perfect for each
 open $U\se M$, then every continuous cocycle is diagonal. If, moreover,
 each $\Gamma_{c}(\left.\fK\right|_{U})$ is perfect, then every cocycle
 is diagonal.
\end{lemma}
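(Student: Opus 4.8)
The plan is to exploit a partition-of-unity argument to reduce any cocycle to a sum of "local" pieces, and then show that topological/algebraic perfectness forces the cross terms to vanish. Suppose $\psi$ is a $2$-cocycle on $\Gamma_c(\fK)$ with values in $X$, and let $\eta,\xi\in\Gamma_c(\fK)$ have disjoint supports. Choose an open set $U\se M$ containing $\supp(\xi)$ whose closure is disjoint from $\supp(\eta)$, so that $\xi\in\Gamma_c(\fK|_U)$ (extended by zero). The key computation is the cocycle identity
\[
 \psi([\xi_1,\xi_2],\eta)=\psi(\xi_1,[\xi_2,\eta])-\psi(\xi_2,[\xi_1,\eta])
\]
for $\xi_1,\xi_2\in\Gamma_c(\fK|_U)$. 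Since $\supp(\xi_i)\se U$ is disjoint from $\supp(\eta)$, both brackets $[\xi_i,\eta]$ vanish pointwise, hence $\psi([\xi_1,\xi_2],\eta)=0$. Thus $\psi(\,\cdot\,,\eta)$ annihilates the commutator subalgebra $[\Gamma_c(\fK|_U),\Gamma_c(\fK|_U)]$.

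Now invoke perfectness. If $\Gamma_c(\fK|_U)$ is perfect, this commutator subalgebra is all of $\Gamma_c(\fK|_U)$, so $\psi(\xi,\eta)=0$ and $\psi$ is diagonal. If instead $\psi$ is continuous and $\Gamma_c(\fK|_U)$ is topologically perfect, then the commutator subalgebra is dense; since $\xi\mapsto\psi(\xi,\eta)$ is continuous and vanishes on a dense subspace, it vanishes identically, again giving $\psi(\xi,\eta)=0$. This handles both halves of the statement at once, the only difference being whether one needs the closure step.

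The main obstacle I anticipate is purely bookkeeping with supports and topologies: one must check that extension-by-zero $\Gamma_c(\fK|_U)\to\Gamma_c(\fK)$ is a morphism of topological Lie algebras (so that continuity of $\psi$ on $\Gamma_c(\fK)$ restricts to continuity on $\Gamma_c(\fK|_U)$), and that a suitable $U$ with $\overline{U}\cap\supp(\eta)=\emptyset$ and $\supp(\xi)\se U$ always exists — this is immediate since supports are compact and disjoint, but should be stated. There is also a mild subtlety in the continuous case: one wants density of $[\Gamma_c(\fK|_U),\Gamma_c(\fK|_U)]$ in $\Gamma_c(\fK|_U)$ and continuity of the bracket so that the closure of the commutator subalgebra is a subalgebra; "topologically perfect" is presumably defined precisely so that this works, so no real difficulty arises. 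No deep input is needed beyond the cocycle identity and perfectness — the lemma is genuinely a soft locality statement, and the work is in setting up the restriction maps cleanly.
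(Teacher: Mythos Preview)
Your proof is correct and takes essentially the same approach as the paper: apply the cocycle identity, observe that brackets of sections with disjoint support vanish, and invoke (topological) perfectness to conclude. The only cosmetic difference is that the paper sets $U := M \setminus \supp(\xi)$ and writes $\eta$ (rather than $\xi$) as a (limit of a) sum of commutators in $\Gamma_c(\fK|_U)$; the two arguments are mirror images, and you are in fact more explicit than the paper about the continuity of the extension-by-zero map.
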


\begin{proof}
 Suppose that $\xi$ and $\eta$ in $\Gamma_{c}(\fK)$ have disjoint
 support and set $U:=M\backslash \supp(\xi)$. Since $\xi$ and $\eta$
 have disjoint support, we have that
 $\left.\eta\right|_{U}\in \Gamma_{c}(\left.\fK\right|_{U})$. By
 assumption, we can write
 \begin{equation*}
  \left.\eta\right|_{U}=\lim _{\stackrel{\xrightarrow{~~}}{i}} \eta_{i},
 \end{equation*}
 where $(\eta_{i})_{i\in I}$ is a convergent net and
 $\eta_{i}=\sum_{j}[\mu_{j,i},\nu_{j,i}]$ is a finite sum of commutators
 in $\Gamma_{c}(\left.\fK\right|_{U})$. In case that
 $\Gamma_{c}(\left.\fK\right|_{U})$ is perfect, we may assume that $I$
 is finite.

 We now set $\mu'_{j,i}$ to be the continuous extension of $\mu_{j,i}$ by zero,
 and likewise we define $\nu'_{j,i}$ by extending $\nu_{j,i}$.
 Observe that we
 have in particular $[\xi,\mu'_{j,i}]=[\xi,\nu'_{j,i}]=0$. This now
 implies
 \begin{multline*}
  \psi(\xi,\eta)=
  \psi(\xi,\lim_{\stackrel{\xrightarrow{~~}}{i}}\sum_{j}[\mu_{j,i}',\nu_{j,i}'])
  =\lim_{\stackrel{\xrightarrow{~~}}{i}}\sum_{j}\psi(\xi,[\mu_{j,i}',\nu_{j,i}'])\\
  =\lim_{\stackrel{\xrightarrow{~~}}{i}}\sum_{j}\big(\psi(\mu_{j,i}',[\xi,\nu'_{j,i}])+
  \psi(\nu'_{j,i},[\mu'_{j,i},\xi])\big)
  =\lim_{\stackrel{\xrightarrow{~~}}{i}}\sum_{j}\big(\psi(\mu'_{j,i},0)+
  \psi(\nu'_{j,i},0)\big)=0
 \end{multline*}
 for each cocycle in the case of finite $I$ and for each continuous
 cocycle in the case of arbitrary $I$.
\end{proof}

\begin{remark}
 The previous proof also works for the Lie algebra of compactly supported
 vector fields $\op{Vec}_{c}(M)$ (cf.\ \cite[Cor.\
 7.4]{Janssens10Transformation--Uncertainty.-Some-Thoughts-on-Quantum-Probability-Theory-Quantum-Statistics-and-Natural-Bundles} and
 \cite{ShanksPursell54The-Lie-algebra-of-a-smooth-manifold}), and can even be
 generalised (cf.\
 \cite{Amemiya75Lie-algebra-of-vector-fields-and-complex-structure}) to
 $\op{Vec}(M)$. Now both Lie algebras satisfy the conditions from Lemma
 \ref{lem:perfectness}, so that their second Lie algebra cohomology is diagonal
 (cf.\ \cite[Cor.\
 6.3]{GelfandFuks70Cohomologies-of-the-Lie-algebra-of-formal-vector-fields}).
 Results such as Peetre's theorem
 \cite{Peetre60Rectification-a-larticle-Une-caracterisation-abstraite-des-operateurs-differentiels}
 tell one that continuity and diagonality are not worlds apart, which raises
 the interesting question of whether $H^{2}(\op{Vec}(M),\mathbb{R})$ is in fact
 isomorphic to the \emph{continuous} cohomology
 $H^{2}_{\op{ct}}(\op{Vec}(M),\mathbb{R})$. The latter has been calculated
 explicitly (cf.\
 \cite{GelfandFuks70Cohomologies-of-the-Lie-algebra-of-formal-vector-fields}
 and \cite{Fuks86Cohomology-of-infinite-dimensional-Lie-algebras}).
\end{remark}

\begin{proposition}\label{prop:supportDecreasing}
 If the fibres of $\fK$ are topologically perfect, then so is
 $\Gamma_{c}(\fK)$. If the fibres of $\fK$ are finite-dimensional
 perfect Lie algebras, then $\Gamma_{c}(\fK)$ is even perfect. The same
 conclusions hold in particular for
 $\Gamma_{c}(\left.\fK\right|_{U})$ with arbitrary open $U\se M$.
\end{proposition}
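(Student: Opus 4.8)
The plan is to reduce the global statement to a partition-of-unity argument. First I would treat the local model: over a trivialising open set $U_0 \cong \R^n$ on which $\fK$ is trivial with fibre $\fk$, one has $\Gamma_c(\fK|_{U_0}) \cong C^\infty_c(U_0,\fk)$, and I claim this is (topologically) perfect whenever $\fk$ is. For the algebraic statement with $\fk$ finite-dimensional perfect, write any element as $\sum_a f_a \otimes x_a$ with $f_a \in C^\infty_c(U_0)$ and $x_a \in \fk$; since $\fk = [\fk,\fk]$, each $x_a = \sum_b [y_b, z_b]$, and then $f_a \otimes [y_b,z_b] = [f_a\otimes y_b, g\otimes z_b]$ for any $g \in C^\infty_c(U_0)$ with $g \equiv 1$ on $\supp(f_a)$; summing gives a finite sum of commutators. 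For the topological statement with $\fk$ merely topologically perfect, approximate $x_a$ by finite sums of commutators $\sum_b [y_b^{(i)}, z_b^{(i)}] \to x_a$ and carry the approximation through the (continuous) maps $x \mapsto f_a \otimes x$; convergence in $C^\infty_c(U_0,\fk)$ with fixed compact support follows since we may choose the cut-off $g$ once and for all.

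Next I would globalise. Given $\eta \in \Gamma_c(\fK)$, its support $K$ is compact, so finitely many trivialising opens $U_1,\dots,U_m$ cover $K$; choose a subordinate partition of unity $\chi_1,\dots,\chi_m$ with $\sum \chi_j \equiv 1$ near $K$ and $\supp(\chi_j) \subseteq U_j$ compact. Then $\eta = \sum_j \chi_j \eta$ with $\chi_j\eta \in \Gamma_c(\fK|_{U_j})$, and by the local case each $\chi_j\eta$ is a finite sum of commutators in $\Gamma_c(\fK|_{U_j})$, hence (extending by zero) a finite sum of commutators in $\Gamma_c(\fK)$. This settles the algebraic case. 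For the topological case, each $\chi_j\eta$ is a limit of finite sums of commutators; since the sum over $j$ is finite and addition is continuous, $\eta$ is again such a limit, and the inductive-limit topology on $\Gamma_c(\fK)$ is compatible because all the approximants live in $\Gamma_K(\fK)$ for a fixed compact $K$. The statement for $\Gamma_c(\fK|_U)$ is the same argument applied to the restricted bundle.

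The main obstacle I anticipate is purely topological rather than algebraic: making sure the commutator decompositions one obtains converge in the correct topology. The subtlety is that the inductive limit $\Gamma_c(\fK) = \varinjlim \Gamma_K(\fK)$ is over compact $K$, and one must check that the commutators $[\mu_{j,i}',\nu_{j,i}']$ produced by extending local approximants by zero all have support in one fixed compact set — this is arranged by fixing the cut-off functions $g$ and $\chi_j$ before approximating, so their supports are controlled independently of $i$. Once that is in place, continuity of the bracket $\Gamma_{K'}(\fK) \times \Gamma_{K'}(\fK) \to \Gamma_{K'}(\fK)$ and of scalar/partition multiplication does the rest. I would also remark that "finite-dimensional perfect" already forces $\fk$ semisimple-like behaviour is \emph{not} needed here — only $\fk=[\fk,\fk]$ is used — which is why the proposition is stated in this generality.
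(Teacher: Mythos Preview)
Your overall strategy coincides with the paper's: reduce via a partition of unity to the local trivial model $C^\infty_c(U,\fk)$, and treat that directly. The finite-dimensional perfect case is correct, and your cut-off trick $f\otimes[y,z]=[f\otimes y,\,g\otimes z]$ is exactly what the paper packages into the identity $[C^\infty_c(U)\otimes\fk,\,C^\infty_c(U)\otimes\fk]=C^\infty_c(U)\otimes[\fk,\fk]$.

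There is, however, a genuine gap in your treatment of the topologically perfect case. You begin by writing a general element of $C^\infty_c(U_0,\fk)$ as a finite sum $\sum_a f_a\otimes x_a$, but when $\fk$ is infinite-dimensional this expression only reaches the \emph{algebraic} tensor product $C^\infty_c(U_0)\otimes\fk$, which is a proper subspace of $C^\infty_c(U_0,\fk)$. Your approximation argument therefore shows only that elements of $C^\infty_c(U_0)\otimes\fk$ lie in the closure of the commutator ideal; to conclude topological perfectness you still need the non-trivial fact that $C^\infty_c(U_0)\otimes\fk$ is dense in $C^\infty_c(U_0,\fk)$. The paper supplies this step by invoking Grothendieck's identification $C^\infty_c(U)\mathbin{\widehat{\otimes}}\,\overline{\fk}\cong C^\infty_c(U,\overline{\fk})$, so that density of $[\fk,\fk]$ in $\fk$ yields density of $C^\infty_c(U)\otimes[\fk,\fk]$ in $C^\infty_c(U,\fk)$ directly. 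Once you insert this density statement (or an equivalent argument), your proof is complete and essentially identical to the paper's. Your discussion of support control in the inductive-limit topology is sound, but it addresses a different issue and does not close this gap.
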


\begin{proof}
 Since each $\xi\in \Gamma_{c}(\fK)$ is compactly supported, it can be
 written as $\xi=\sum_{i=1}^{N}\xi_{i}$ with each $\xi_{i}$ having
 compact support in a trivialising open subset $U_{i}$. In order to
 substantiate the first claim, it thus suffices to show that
 $C^{\infty}_{c}(U_{i},\fk)$ is topologically perfect if $\fk$ is
 topologically perfect. We clearly have
 \begin{equation*}
  [C_{c}^{\infty}(U_{i})\otimes\fk,  C_{c}^{\infty}(U_{i})\otimes\fk]=
  C^{\infty}_{c}(U_{i})\otimes[\fk,\fk].
 \end{equation*}
 Now $C^{\infty}_{c}(U_i)\otimes [\fk,\fk]$ is dense in
 $C^{\infty}_{c}(U_{i})\op{\wh{\otimes}}\ol{\fk}$ ($\ol{\fk}$ denoting
 the uniform completion of $\fk$), and since
 $C^{\infty}_{c}(U_{i})\op{\wh{\otimes}}\ol{\fk}\cong C_{c}^{\infty}(U_{i},\ol{\fk})$
 \cite[Chap.\ II, p.\
 81]{Grothendieck55Produits-tensoriels-topologiques-et-espaces-nucleaires},
 $C^{\infty}_{c}(U_i)\otimes [\fk,\fk]$ is dense, considered as a subspace of
 $C^{\infty}_{c}(U_{i},\fk)$. Thus
 $[C^{\infty}_{c}(U_{i},\fk),C^{\infty}_{c}(U_{i},\fk)]$ is dense in
 $C^{\infty}_{c}(U_{i},\fk)$.

 If $\fk$ is a finite-dimensional, perfect Lie algebra, then all spaces
 considered above are in fact equal and we have
 \begin{equation*}
  [C^{\infty}_{c}(U_{i},\fk),C^{\infty}_{c}(U_{i},\fk)]= 
  [C_{c}^{\infty}(U_{i})\otimes\fk,  C_{c}^{\infty}(U_{i})\otimes\fk]=
  C^{\infty}_{c}(U_{i})\otimes[\fk,\fk]=C^{\infty}_{c}(U_{i},\fk).
 \end{equation*}
\end{proof}

For the next result, recall that a monopresheaf is a presheaf that satisfies the 
``local identity'' axiom but not necessarily the ``gluing'' axiom.

\begin{corollary}\label{cor:monopresheaf}
 If the fibres of $\fK$ are topologically perfect, then
 $\cS_{\op{ct}}(U) = H^{2}_{\op{ct}}(\Gamma_{c}(\fK|_{U}) , X)$
 constitutes a monopresheaf of vector spaces. If the fibres are
 finite-dimensional perfect Lie algebras, then the same applies to
 $\cS(U) = H^{2}(\Gamma_{c}(\fK|_{U}) , X)$.
\end{corollary}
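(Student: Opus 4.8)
The plan is to check the two defining properties of a (mono)presheaf: functoriality under restriction (existence of restriction maps), and the local identity axiom. The restriction maps come for free from functoriality of everything in sight: if $U \se V$ are open in $M$, then extension by zero gives a continuous Lie algebra homomorphism $\Gamma_{c}(\fK|_{U}) \to \Gamma_{c}(\fK|_{V})$ (an embedding onto a closed ideal, in fact), and pulling back cocycles along this map yields a linear map $\cS_{\op{ct}}(V) \to \cS_{\op{ct}}(U)$ (respectively $\cS(V) \to \cS(U)$ without the continuity subscript). These are visibly compatible with further restriction, so $\cS_{\op{ct}}$ and $\cS$ are presheaves of vector spaces; the only real content is the local identity axiom.

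So let $\{U_{i}\}_{i\in I}$ be an open cover of an open set $U \se M$, and let $[\psi] \in \cS_{\op{ct}}(U)$ be a class whose restriction to each $U_{i}$ vanishes; I must show $[\psi] = 0$. Choose a representative cocycle $\psi$. For each $i$, vanishing of $[\psi|_{U_{i}}]$ means there is a continuous linear functional $b_{i}\from \Gamma_{c}(\fK|_{U_{i}}) \to X$ with $\psi(\eta,\xi) = b_{i}([\eta,\xi])$ for all $\eta,\xi$ supported in $U_{i}$. The key tool is Lemma \ref{lem:perfectness} together with Proposition \ref{prop:supportDecreasing}: since the fibres are topologically perfect, each $\Gamma_{c}(\fK|_{W})$ is topologically perfect for every open $W$, hence every continuous cocycle on $\Gamma_{c}(\fK)$ — and likewise on any $\Gamma_{c}(\fK|_{W})$ — is diagonal. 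Thus $\psi$ is determined by its values on pairs with overlapping support, and such pairs can, after a partition-of-unity decomposition, be pushed into a single chart. Concretely: given $\eta,\xi \in \Gamma_{c}(\fK|_{U})$, write $\eta = \sum_{k} \eta_{k}$ with $\supp(\eta_{k})$ contained in some $U_{i(k)}$ (subordinate partition of unity), and similarly $\xi = \sum_{l}\xi_{l}$; by bilinearity and diagonality, $\psi(\eta,\xi) = \sum_{k,l} \psi(\eta_{k},\xi_{l})$ with only the terms where $\supp(\eta_{k})\cap\supp(\xi_{l})\neq\emptyset$ surviving, and for each such surviving term both supports lie in a common $U_{i}$, so $\psi(\eta_{k},\xi_{l}) = b_{i}([\eta_{k},\xi_{l}])$. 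The task is then to assemble the local primitives $b_{i}$ into a single global continuous linear functional $b$ on $\Gamma_{c}(\fK|_{U})$ with $\psi = b\circ[\cdot,\cdot]$; once that is done, $[\psi]=0$ in $\cS_{\op{ct}}(U)$.

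The assembly step is where I expect the main obstacle, and it is essentially a Čech $H^{1}$-vanishing argument in disguise. On a double overlap $U_{i}\cap U_{j}$, the difference $b_{i} - b_{j}$ annihilates $[\Gamma_{c}(\fK|_{U_{i}\cap U_{j}}),\Gamma_{c}(\fK|_{U_{i}\cap U_{j}})]$, which by Proposition \ref{prop:supportDecreasing} is dense (in the topologically perfect case) in $\Gamma_{c}(\fK|_{U_{i}\cap U_{j}})$; by continuity $b_{i} - b_{j}$ vanishes identically on $\Gamma_{c}(\fK|_{U_{i}\cap U_{j}})$. Hence the local primitives already agree on overlaps, and one glues them directly: using a partition of unity $\{\chi_{i}\}$ subordinate to $\{U_{i}\}$, set $b(\zeta) := \sum_{i} b_{i}(\chi_{i}\zeta)$ for $\zeta \in \Gamma_{c}(\fK|_{U})$, which is a finite sum, continuous in $\zeta$, and independent of the choices by the overlap agreement. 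A short computation using diagonality of $\psi$, the decomposition $\zeta = \sum_{i}\chi_{i}\zeta$, and the identity $\psi = b_{i}\circ[\cdot,\cdot]$ on each $U_{i}$ shows $b([\eta,\xi]) = \psi(\eta,\xi)$ for all $\eta,\xi$. In the finite-dimensional perfect case one runs the identical argument with "dense" replaced by "equal" and with no continuity requirement on the $b_{i}$, which is why the local identity axiom holds for $\cS$ as well; note that we make no claim about the gluing axiom, so "monopresheaf" is exactly the right assertion.
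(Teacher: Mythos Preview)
Your argument is essentially the paper's: define the global primitive $b(\zeta)=\sum_i b_i(\chi_i\zeta)$ via a partition of unity and verify $\psi=\delta b$ using diagonality. Two comments. First, the claim in your second paragraph that ``for each such surviving term both supports lie in a common $U_i$'' is unjustified as stated---if $\supp(\eta_k)\subset U_{i(k)}$ and $\supp(\xi_l)\subset U_{j(l)}$ overlap, their union need not sit inside any single member of the cover, so you cannot directly invoke $\psi=b_i\circ[\cdot,\cdot]$ on that pair---but fortunately your actual assembly step does not rely on this; the ``short computation'' you defer to is exactly the paper's trick of introducing auxiliary $\chi'_i$ with $\chi'_i\equiv 1$ near $\supp(\chi_i)$ and $\supp(\chi'_i)\subset U_i$, so that $\psi(\chi_i\eta,\xi)=\psi(\chi_i\eta,\chi'_i\xi)=b_i([\chi_i\eta,\chi'_i\xi])=b_i(\chi_i[\eta,\xi])$ and summing over $i$ gives $\psi(\eta,\xi)=b([\eta,\xi])$. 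Second, your overlap-agreement observation (that $b_i-b_j$ vanishes on $\Gamma_c(\fK|_{U_i\cap U_j})$ by perfectness plus continuity) is correct and conceptually pleasant, but the paper does without it: the direct verification $\psi=\delta b$ never needs to compare $b_i$ with $b_j$, so the ``independence of choices'' you derive from it is a bonus rather than an ingredient.
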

\begin{proof}
 If $V \subseteq U$, then the
 restriction map $\rho_{VU}\from\cS(U)\to\cS(V)$ is defined as
 $\rho_{VU}([\psi_{U}]) = [\psi_{V}]$ with
 $\psi_{V}(\xi,\eta) := \psi_{U}(\wt{\xi},\wt{\eta})$, where $\wt{\xi}$
 and $\wt{\eta}$ denote the extensions by zero of $\xi$ and $\eta$ from
 $V$ to $U$. In particular, $\psi_V$ is continuous if $\psi_U$ is so.
 The class $[\psi_V]$ does not depend on the choice of
 $\psi_{U}\in [\psi_U]$. Indeed, if
 $\psi_{U} - \psi'_{U} = \ld \beta_{U}$, then
 $\psi_{V}(\xi,\eta) - \psi'_{V}(\xi,\eta) = \beta_{U}([\wt{\eta},\wt{\xi}]) = \beta_{U}(\wt{[\eta,\xi]}) = \ld \beta_{V}(\xi,\eta)$,
 where again $\beta_V$ is continuous if $\beta_U$ is. The presheaf
 property $\rho_{WV}\circ \rho_{VU} = \rho_{WU}$ is clear from the
 definition.

 The fact that this presheaf is actually a monopresheaf will now follow
 from Lemma \ref{lem:perfectness} and Proposition
 \ref{prop:supportDecreasing}, which tell us that the cocycles
 considered are diagonal. Let $\{V_i\}_{i\in I}$ be an open cover of
 $U$. For the `local identity' axiom to hold, we must prove that
 $[\psi_U]$ vanishes if its restriction $[\psi_{i}]$ (with
 $\psi_i := \psi_{V_i}$) vanishes for all $i \in I$. Replace
 $\{V_i\}_{i \in I}$ by a subcover that intersects every compactum in
 only finitely many $V_i$, and equip it with a partition of unity
 $\{\lambda_{i}\}_{i\in I}$. Let $\psi_{i} = \ld\beta_{i}$. We then
 define
 \begin{equation*}
  \beta_{U}(\chi) := \sum_{i \in I}\beta_{i}(\lambda_i \chi)
 \end{equation*}
 for $\chi \in \Gamma_{c}(\fK|_{U})$, the sum containing only finitely
 many nonzero terms because $\supp(\chi)$ intersects only finitely many
 $V_i$. Clearly $\beta_{U}$ is continuous if all the $\beta_i$ are. We
 prove that $\psi_{U} = \delta \beta_{U}$. By diagonality of the cocycle
 $\psi_{U}$, we can write
 $\psi_{U}(\lambda_i\xi,\eta) = \psi_{U}(\lambda_i\xi,\lambda'_{i}\eta)$,
 where $\lambda'_{i}$ is some function with support contained in $V_i$
 that satisfies $\lambda'_{i}\equiv 1$ on a neighbourhood of
 $\supp(\lambda_i)$. Indeed,
 $\psi_{U}(\lambda_i\xi, \eta - \lambda'_{i}\eta) = 0$ because
 $\lambda_i\xi$ and $\eta - \lambda'_{i}\eta$ have disjoint support. We
 can therefore write
 \begin{equation*}
  \psi_{U}(\xi,\eta) 
  =
  \sum_{i \in I}\psi_{U}(\lambda_{i}\xi , \lambda'_{i}\eta)
  = 
  \sum_{i\in I}\psi_{i}(\lambda_{i}\xi , \lambda'_{i}\eta)
  =
  \sum_{i \in I}\beta_{i}(\lambda_{i}\lambda'_{i}[\xi , \eta])
  =  
  \beta_{U}([\xi, \eta])\,,
 \end{equation*}
 where in the last step we used $\lambda_{i}\lambda'_{i} = \lambda_{i}$.
 Thus $[\psi_{U}] = [\delta \beta_{U}] = 0$, as required.
\end{proof}

\begin{proposition}\label{prop:sheaf}
 The assignment $\cF(U) = \Hom(\ol{\Omega}{}^{1}_{c}(U,V(\fK)),X)$
 constitutes a sheaf of vector spaces.
The same goes for
 $\cF_{\op{ct}}(U) = \Hom_{\op{ct}}(\ol{\Omega}{}^{1}_{c}(U,V(\fK)),X)$.
\end{proposition}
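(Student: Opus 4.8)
The plan is to verify the two sheaf axioms — local identity and gluing — directly from the definition of the restriction maps, which here are the obvious ones: for $V\se U$, a linear functional $\phi_U$ on $\ol{\Omega}{}^{1}_{c}(U,V(\fK))$ restricts to $\phi_V:=\phi_U\circ\iota_{VU}$, where $\iota_{VU}\from\ol{\Omega}{}^{1}_{c}(V,V(\fK))\to\ol{\Omega}{}^{1}_{c}(U,V(\fK))$ is induced by extension of compactly supported forms by zero. (One should first note that $\iota_{VU}$ is well defined on the quotients $\ol{\Omega}{}^{1}_{c}=\Omega^{1}_{c}/\dd\Omega^{0}_{c}$, since extension by zero commutes with $\dd$; and that it is continuous, so that the restriction also makes sense for $\cF_{\op{ct}}$.) The presheaf identity $\iota_{WU}=\iota_{VU}\circ\iota_{WV}$ for $W\se V\se U$ is immediate, hence $\rho_{WU}=\rho_{WV}\circ\rho_{VU}$.

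The key structural fact I would isolate and use is that, for any open cover $\{V_i\}_{i\in I}$ of $U$, the natural map $\bigoplus_{i}\Omega^{0}_{c}(V_i,V(\fK))\oplus\bigoplus_{i}\Omega^{1}_{c}(V_i,V(\fK))\to\ol{\Omega}{}^{1}_{c}(U,V(\fK))$ coming from extension by zero is \emph{surjective}, and moreover a form on $U$ whose class is zero can be split compatibly. Concretely: pick a locally finite refinement with a subordinate partition of unity $\{\lambda_i\}$. Then any $\alpha\in\Omega^{1}_{c}(U,V(\fK))$ decomposes as $\alpha=\sum_i\lambda_i\alpha$ with $\lambda_i\alpha$ supported in $V_i$ (finite sum since $\supp\alpha$ is compact), giving surjectivity at the level of $\Omega^1_c$ and hence on the quotient $\ol{\Omega}{}^1_c$. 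This is the engine for both axioms.

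For the \emph{local identity} axiom: if $\phi_U$ restricts to $0$ on every $V_i$, then for $\alpha\in\Omega^{1}_{c}(U,V(\fK))$ we write $\alpha=\sum_i\lambda_i\alpha$, each $\lambda_i\alpha$ being (the extension by zero of) a form on $V_i$, so $\phi_U([\alpha])=\sum_i\phi_U([\lambda_i\alpha])=\sum_i\phi_{V_i}([\lambda_i\alpha|_{V_i}])=0$; hence $\phi_U=0$. For \emph{gluing}: given $\phi_i\in\cF(V_i)$ agreeing on overlaps, define $\phi_U([\alpha]):=\sum_i\phi_i([\lambda_i\alpha|_{V_i}])$. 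Linearity is clear; the finite-sum issue is handled by compact support as above. The content is \textbf{well-definedness and the compatibility check}: one must show this is independent of the chosen partition of unity and of the representative $\alpha$ of its class, and that it restricts back to $\phi_j$ on each $V_j$. Independence of representative uses that if $\alpha=\dd f$ with $f\in\Omega^{0}_{c}(U,V(\fK))$, then $\lambda_i\alpha|_{V_i}=\lambda_i\dd f|_{V_i}$ differs from $\dd((\lambda_i f)|_{V_i})$ by $(\dd\lambda_i)f|_{V_i}$, and summing over $i$ the terms $(\dd\lambda_i)f$ cancel since $\sum_i\dd\lambda_i=\dd(1)=0$ on $\supp f$; so $\sum_i\phi_i([\lambda_i\dd f|_{V_i}])=0$. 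Independence of the partition of unity, and the restriction property $\rho_{V_jU}(\phi_U)=\phi_j$, follow from the cocycle-type compatibility $\phi_i=\phi_j$ on $V_i\cap V_j$ together with the same telescoping/cancellation trick — this is the one place where the compatibility hypothesis is genuinely used.

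The main obstacle is bookkeeping rather than conceptual: making sure every manipulation respects the passage to the quotient $\ol{\Omega}{}^{1}_{c}=\Omega^{1}_{c}/\dd\Omega^{0}_{c}$ (so that all the "$[\,\cdot\,]$" are legitimate and the $(\dd\lambda_i)f$ terms really do cancel in the quotient), and checking at each step that continuity is preserved when $\phi_i\in\cF_{\op{ct}}(V_i)$ — but multiplication by $\lambda_i$ and extension by zero are continuous on the relevant LF-spaces, so $\phi_U$ inherits continuity. No deep input is needed; the proof is parallel to the monopresheaf argument in Corollary \ref{cor:monopresheaf}, only easier, since there is no bracket and no diagonality to invoke — just partitions of unity and the exactness of extension by zero.
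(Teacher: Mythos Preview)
Your approach is correct in spirit but differs from the paper's: rather than verifying the sheaf axioms by hand, the paper dualises the problem, observing that $\cF(U)=\Hom(\check\cF(U),X)$ for the precosheaf $\check\cF(U)=\ol\Omega{}^{1}_{c}(U,V(\fK))$, and then checks that $\check\cF$ is a \emph{cosheaf} using Bredon's criterion (surjectivity of $\check\cF(V)\oplus\check\cF(W)\to\check\cF(V\cup W)$, an intersection lifting property, and compatibility with directed unions). The cosheaf route is slightly more structural and avoids the explicit gluing formula with a partition of unity; your direct route is more elementary and self-contained, but correspondingly requires more bookkeeping.

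There is, however, one genuine slip in your argument. In the well-definedness step you write that for $\alpha=\dd f$ the terms $(\dd\lambda_i)f$ ``cancel since $\sum_i\dd\lambda_i=0$'', and you then say that the compatibility hypothesis is used \emph{only} for independence of the partition and the restriction property. But the cancellation $\sum_i(\dd\lambda_i)f=0$ holds in $\Omega^{1}_{c}(U)$, whereas in your formula each summand $[(\dd\lambda_i)f|_{V_i}]$ is fed to a \emph{different} functional $\phi_i$; you cannot conclude $\sum_i\phi_i\bigl([(\dd\lambda_i)f|_{V_i}]\bigr)=0$ from the vanishing of the sum of the forms alone. The fix is precisely to invoke compatibility here as well: decompose $(\dd\lambda_i)f=\sum_j\lambda_j(\dd\lambda_i)f$, note that each piece is supported in $V_i\cap V_j$, use $\phi_i=\phi_j$ on $V_{ij}$ to swap $\phi_i$ for $\phi_j$, and then sum over $i$ first to get zero. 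So compatibility is needed in \emph{two} places, not one; once this is corrected your proof goes through.
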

Recall that
 $\ol{\Omega}{}^{1}_{c}(U,V(\fK))$ was defined as $\Omega^1_{c}(U,V(\fK))/\dd \Omega^0_{c}(U,V(\fK))$.
For two vector spaces $X$ and $Y$, 
 $\Hom(X,Y)$ denotes the space of linear maps from $X$ to $Y$. 
If $X$ and $Y$ happen to be topological vector spaces, then
 $\Hom_{\op{ct}}(Y,X)$ is the space of continuous
 linear maps.
\begin{proof}
 Let $W \subseteq V$, with $W$ and $V$ open in $M$. The restriction
 $\rho_{WV} \from\cF(V)\to\cF(W)$ is the dual of the map
 $\iota_{VW} : \ol{\Omega}^1_c(W,V(\fK)) \rightarrow \ol{\Omega}^1_c(V,V(\fK))$
 defined as follows. Take $[\omega_{W}] \in \Omega{}^{1}_{c}(W,V(\fK))$,
 choose a representative $\omega_{W}$, extend it by zero to
 $\omega_{V} \in \Omega{}^{1}_{c}(V,V(\fK))$ and take its class
 $[\omega_V] \in \ol{\Omega}{}^{1}_{c}(V,V(\fK))$. The result does not
 depend on the choice of representative. Indeed, if
 $\omega_{W} - \omega'_{W} = \dd \gamma_{W}$ with
 $\gamma_{W} \in \Omega^0_{c}(W,V(\fK))$, then one can extend
 $\gamma_{W}$ by zero to $\gamma_{V} \in \Omega^0_{c}(V,V(\fK))$ to find
 $\omega_V - \omega'_{V} = \dd \gamma_{V}$. We can therefore define
 $\iota_{VW}([\omega_W]) = [\omega_V]$. The fact that
 $\cF(U) := \Hom(\ol{\Omega}{}^{1}_{c}(U,V(\fK)),X)$ with
 $\rho_{WV} \from \cF(V) \to \cF(W)$ is a presheaf,
 $ \rho_{XW}\circ\rho_{WV} = \rho_{XV}$, follows from the fact that
 $\check{\cF}(U) := \ol{\Omega}{}^{1}_{c}(U,V(\fK))$ with
 $\iota_{VW} \from \check{\cF}(W)\to\check{\cF}(V)$ is a precosheaf,
 $\iota_{VW}\circ\iota_{WX} = \iota_{VX}$. An analogous statement holds
 for $\cF_{\mathrm{ct}}$ because the maps $\iota_{WV}$ are continuous.

 In order to show that $\cF$ is a sheaf, it suffices to show that
 $\check{\cF}$ is a cosheaf.
 For this, one needs to check (\cite[Prop.~1.3]{Bredon68Cosheaves-and-homology}) the following
 3 statements.
 \begin{enumerate}
        \item\label{item:1} For all open $V,W \subseteq M$, we have
        $ \iota_{V \cup W , V} \check{\cF}(V) + \iota_{V \cup W , W} \check{\cF}(W) = \check{\cF}(V \cup W)\,.$
        \item\label{item:2} If
        $\iota_{V \cup W , V}[\omega_V] = \iota_{V \cup W , W}[\omega_W]$
        for $[\omega_V] \in \check{\cF}(V)$ and
        $[\omega_W] \in \check{\cF}(W)$, then there exists an element
        $[\omega_{V\cap W}] \in \check{\cF}(V\cap W)$ such that
        $[\omega_V] = \iota_{V,V\cap W} [\omega_{V\cap W}]$ and
        $[\omega_W] = \iota_{W,V\cap W} [\omega_{V\cap W}]$.
        \item\label{item:3} For every system $\{V_i\}_{I}$ directed
        upwards by inclusion, the natural map
        ${\displaystyle \lim_{\rightarrow}} \, \check{\cF}(V_i) \to \check{\cF}(\cup_{I}V_i)$
        is an isomorphism.
 \end{enumerate}
 Statement \ref{item:1} is true because one can use a partition of unity
 $\{\lambda_V , \lambda_W\}$ to write
 $\omega_{V\cup W} = \lambda_{V}\omega_{V\cup W} + \lambda_{W}\omega_{V\cup W}$.

 We proceed to prove statement \ref{item:2}. Suppose that
 $\iota_{V \cup W , V}[\omega_V] = \iota_{V \cup W , W}[\omega_W]$. Then
 $\omega_V = \omega_W + \dd \gamma_{V\cup W}$, with
 $\supp(\gamma_{V\cup W}) \subseteq V \cup W$. Write
 $\gamma_{V\cup W} = \gamma_V - \gamma_W$, with
 $\supp(\gamma_{V}) \subseteq V$ and $\supp(\gamma_{W}) \subseteq W$.
 Then $\omega_V - \dd \gamma_V = \omega_W - \dd \gamma_W$, so that the
 support of both is contained in $V \cap W$. Statement \ref{item:2} then
 holds with
 $[\omega_{V\cap W}] = [\omega_V - \dd \gamma_V] = [\omega_W - \dd \gamma_W]$
 in $\check{\cF}(V\cap W)$.

 Finally, we verify statement \ref{item:3} by first observing that since
 the support of any $\omega \in \Omega{}^{1}_{c}(\cup_{I}V_i)$ is
 compact, it is contained in some $V_{i_{1}}\cup...\cup V_{i_{l}}$. 
 This shows surjectivity. To
 verify injectivity, we observe that if $[\omega]=0$ in
 $\check{\cF}(\cup_{I}V_i)$, then $\omega=\dd \gamma$ for
 $\gamma\in \Omega_{c}^{0}(\cup_{I}V_i,V(\fK))$. Since
 $\supp(\gamma)$ is compact, we have
 $\gamma\in \Omega_{c}^{0}(V_{i_{1}}\cup...\cup V_{i_{l}},V(\fK))$. Thus
 $[\omega]=0$ in $\check{\cF}(V_{i_{1}}\cup...\cup V_{i_{l}})$ and, consequently, in
 ${\displaystyle \lim_{\rightarrow}} \, \check{\cF}(V_i)$.
\end{proof}

\begin{tabsection}
 From now on, we restrict attention to the case where the fibres of
 $\fK$ are finite-dimensional, semi-simple Lie algebras. We have
 exhibited the sheaf
 $\cF_{\op{ct}}(U) = \Hom_{\op{ct}}(\ol{\Omega}{}^{1}_{c}(U,V(\fK)),X)$
 and the monopresheaf
 $\cS_{\op{ct}}(U) = H^{2}_{\op{ct}}(\Gamma_{c}(\fK|_{U}),X)$. The
 canonical class $[\omega]$ then induces a morphism
 \begin{equation*}
  \mu_{U} \from \cF_{\op{ct}}(U) \to \cS_{\op{ct}}(U)
 \end{equation*}
 of presheaves. For $\xi, \eta \in \Gamma_{c}(\fK|_{U})$, we have
 $\omega(\eta,\xi) = [\kappa(\eta,\nabla\xi)]$ in
 $\ol{\Omega}{}^{1}_{c}(U,V(\fK))$, and the morphism $\mu_{U}$ is then
 simply defined as $\mu_U \phi = [\phi \circ \omega]$.

 If $U \subset M$ is a trivialising neighbourhood, then a local
 trivialisation $\fK|_{U} \cong U \times \fk$ induces isomorphisms
 $\cF_{\op{ct}}(U) \cong \Hom_{\op{ct}}(\ol{\Omega}{}^{1}_{c}(U,V(\fk)),X)$
 and $\cS_{\op{ct}}(U) \cong H^{2}_{\op{ct}}(C_{c}^{\infty}(U,\fk),X)$.
 The map $\mu_{U}$ then takes the shape
 $\phi \mapsto [\phi \circ \omega_{U,\fk}]$ with $\omega_{U,\fk}$ the
 cocycle $\omega_{U,\fk}(f,g) = [\kappa(f,dg)]$. Indeed, $\nabla\xi$
 corresponds with $dg + [A,g]$ in the local trivialisation 
 ($\der(\fk) \cong \fk$ for semi-simple Lie algebras) so that
 $\kappa(\eta,\nabla\xi)$ corresponds with $\kappa(f,dg + [A,g])$. This
 differs from $\kappa(f,dg)$ by a mere coboundary $-\kappa(A,[f,g])$.
 The following theorem shows that $\mu_U$ is an isomorphism for
 sufficiently small $U$.
\end{tabsection}

\begin{theorem}\label{thm:universal-current-algebra-cocycle}
 If $\fk$ is a finite-dimensional semi-simple Lie algebra and $U$ is a
 finite-dimensional manifold, then the cocycle
 \begin{equation}\label{eqn:current-cocycle}
  \omega_{U,\fk}\from C_{c}(U,\fk)\times C_{c}^{\infty}(U,\fk)\to \Omega{}^{1}(U,V(\fk))/dC^{\infty}(U,\fk),\quad
  (f,g)\mapsto [\kappa(f,dg)]
 \end{equation}
 is
 (weakly) universal. This means that the linear map
 \begin{equation*}
  \Hom_{\op{ct}}((\Omega{}^{1}_{c}(U,V(\fk))/dC^{\infty}_{c}(U,\fk)),X)\to 
  H_{\op{ct}}^{2}(C_{c}^{\infty}(U,\fk),X),
  \quad \varphi\mapsto [\varphi \op{\circ}\omega_{U,\fk}]
 \end{equation*}
 is an isomorphism.
\end{theorem}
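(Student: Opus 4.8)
The plan is to prove the two directions of the asserted isomorphism separately: injectivity is elementary, and surjectivity is obtained by reducing a general continuous cocycle, first to the case of a simple fibre and then to a $\fk$-invariant cocycle, and finally analysing such cocycles near the diagonal. Write $\fg:=C^{\infty}_{c}(U,\fk)$ throughout. Since $\fk$ is semisimple it is perfect, so by Proposition~\ref{prop:supportDecreasing} each $C^{\infty}_{c}(V,\fk)$ with $V\se U$ open is perfect, and hence by Lemma~\ref{lem:perfectness} \emph{every} $2$-cocycle on $\Gamma_{c}(\fK|_{V})$ is diagonal — a fact used repeatedly below. Note that $\omega_{U,\fk}(\phi x,\psi y)=[\phi\,d\psi]\,\kappa(x,y)$ in $\ol{\Omega}{}^{1}_{c}(U,V(\fk))$ for $\phi,\psi\in C^{\infty}_{c}(U)$ and $x,y\in\fk$, and that $[\phi\,d\psi]=-[\psi\,d\phi]$ there because $[d(\phi\psi)]=0$. \emph{Injectivity:} suppose $\varphi\circ\omega_{U,\fk}=\delta b$ for a linear map $b\colon\fg\to X$. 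Evaluating this identity on $(\phi x,\psi y)$ and on $(\psi x,\phi y)$ yields (up to one overall sign) the same value $b(\phi\psi\,[x,y])$ of $b$, so subtracting and using $[\psi\,d\phi]=-[\phi\,d\psi]$ gives $\varphi\big([\phi\,d\psi]\,\kappa(x,y)\big)=0$. A partition-of-unity argument in local coordinates — cutting off the coordinate functions to make them compactly supported — shows that the forms $\phi\,d\psi$ span $\Omega^{1}_{c}(U)$, and since $V(\fk)$ is finite-dimensional and spanned by the $\kappa(x,y)$, the elements $[\phi\,d\psi]\,\kappa(x,y)$ span $\ol{\Omega}{}^{1}_{c}(U,V(\fk))$; hence $\varphi=0$.

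For surjectivity, let $\psi$ be a continuous $2$-cocycle on $\fg$ with values in $X$; I must produce a continuous $\varphi$ with $[\psi]=[\varphi\circ\omega_{U,\fk}]$. Decomposing $\fk=\bigoplus_{i}\fk_{i}$ into simple ideals splits $\fg$ into a direct sum of ideals $C^{\infty}_{c}(U,\fk_{i})$ with vanishing mutual brackets; the cocycle identity together with perfectness of the summands forces $\psi$ to vanish on mixed pairs, and since $V(\fk)=\bigoplus_{i}V(\fk_{i})$ with $\kappa$ block-diagonal, both $\psi$ and $\omega_{U,\fk}$ split accordingly, so I may assume $\fk$ is simple — whence $V(\fk)\cong\R$ and $\kappa$ is a nonzero multiple of the Killing form. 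Next I arrange $\psi$ to be \emph{$\fk$-invariant}, i.e.\ $\psi([x,f],g)+\psi(f,[x,g])=0$ for $x\in\fk$, $f,g\in\fg$, where $\fk$ acts on $\fg$ by pointwise bracket. Diagonality lets one define $(\iota_{x}\psi)(f):=\psi(\chi x,f)$ with $\chi\in C^{\infty}_{c}(U)$ equal to $1$ near $\supp f$ (independent of the choice of $\chi$ by diagonality, and linear and continuous in $f$), and applying the cocycle identity to $(\chi x,f,g)$ shows $x\cdot\psi=\delta(\iota_{x}\psi)$; hence $\fk$ acts trivially on $H^{2}_{\op{ct}}(\fg,X)$. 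Since $\fg$, and therefore the relevant cochain modules, are locally finite $\fk$-modules (every element lies in a finite-dimensional invariant subspace), averaging over $\fk$ — projecting onto the isotypic component of the trivial representation — gives a chain-map projection onto the $\fk$-invariant cochains, and applying it replaces $\psi$ by a cohomologous $\fk$-invariant cocycle. The cocycle $\omega_{U,\fk}$ is already $\fk$-invariant, because $\der(\fk)=\fk$ acts trivially on $V(\fk)$.

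A continuous $\fk$-invariant bilinear map factors through the $\fk$-coinvariants of $\fg\otimes\fg$, which for simple $\fk$ — using nuclearity of $C^{\infty}_{c}(U)$ and $(\fk\otimes\fk)_{\fk}\cong\R$ — identifies $\psi$ with a continuous linear $\Psi\colon C^{\infty}_{c}(U\times U,V(\fk))\to X$ via $\psi(f,g)=\Psi\big((p,q)\mapsto\kappa(f(p),g(q))\big)$. By diagonality $\Psi$ vanishes on functions supported off the diagonal $\Delta\cong U$, hence (by continuity) on functions vanishing to infinite order along $\Delta$, so $\Psi$ factors through the jets of $C^{\infty}_{c}(U\times U,V(\fk))$ along $\Delta$, whose normal bundle is $TU$; a Peetre-type locality argument \cite{Peetre60Rectification-a-larticle-Une-caracterisation-abstraite-des-operateurs-differentiels} controls the order in each degree. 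The transposition $p\leftrightarrow q$ acts on the order-$k$ part by $(-1)^{k}$, so skew-symmetry of $\psi$ kills all even orders, and — via $\kappa(f,dg)+\kappa(g,df)=d\kappa(f,g)$ together with the fact that the functions $\kappa(f,g)$ span $C^{\infty}_{c}(U,V(\fk))$ — forces the order-$1$ part to descend to a continuous $\varphi\colon\ol{\Omega}{}^{1}_{c}(U,V(\fk))\to X$ that agrees with $\psi$ in order $1$. Finally the odd orders $k\ge3$ are removed by extracting the order-$k$ component of the cocycle identity, which admits no nonzero solution; what remains is $\psi=\varphi\circ\omega_{U,\fk}$ up to a coboundary.

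The main obstacle is this last step: organising the jet expansion of a $\fk$-invariant cocycle transverse to the diagonal and showing, from the cocycle identity, that all orders $\ge 3$ vanish while the order-$1$ part is exactly (a coboundary away from) $\varphi\circ\omega_{U,\fk}$. This is the one genuinely computational part, and is essentially the analysis carried out by Maier \cite{Maier02Central-extensions-of-topological-current-algebras}; the homological reduction to the $\fk$-invariant case (triviality of the $\fk$-action on $H^{2}_{\op{ct}}(\fg,X)$ and the averaging) needs care but is formal, and the splitting over simple ideals and the injectivity argument are routine. The argument applies to an arbitrary finite-dimensional $U$, even though only small (e.g.\ Euclidean) $U$ are needed for the application in this section.
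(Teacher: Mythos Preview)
Your route is quite different from the paper's, and the difference matters. The paper's proof is short: it applies Maier's Theorems~11 and~16 to the \emph{unitalised} current algebra $C^{\infty}_{c}(U,\fk)\rtimes\fk\cong C^{\infty}_{c}(U)^{+}\otimes\fk$ (where Maier's hypotheses are satisfied), and then shows by a brief diagonality/perfectness/Whitehead argument that the inclusion $C^{\infty}_{c}(U,\fk)\hookrightarrow C^{\infty}_{c}(U,\fk)\rtimes\fk$ induces an isomorphism on $H^{2}_{\op{ct}}$. The paper explicitly flags that Maier's Theorem~16 needs a unital base algebra and therefore cannot be quoted for $C^{\infty}_{c}(U)$ directly --- this is the whole point of the detour through the semidirect product.

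That is exactly where your proposal has a gap. Your injectivity argument and your reduction to a $\fk$-invariant cocycle (diagonality to define $\iota_{x}\psi$, then averaging using local finiteness of the $\fk$-action on cochains) are correct and form a legitimate alternative to the paper's bridging step. But your final stage --- the jet expansion transverse to the diagonal and the assertion that ``the order-$k$ component of the cocycle identity admits no nonzero solution'' for odd $k\geq 3$ --- is only sketched, and you then defer to \cite{Maier02Central-extensions-of-topological-current-algebras}. That deferral does not work: the analysis you need is precisely Maier's Theorems~11 and~16 in the \emph{non-unital} setting, which is not what Maier proves; moreover the higher-order terms are in general not zero but merely coboundaries, so ``admits no nonzero solution'' is too strong as stated. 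If you want to rescue your approach, observe that once $\psi$ is $\fk$-invariant you have $\iota_{x}\psi=0$ on commutators and hence (by perfectness) identically, so the extension of $\psi$ by zero to $C^{\infty}_{c}(U,\fk)\rtimes\fk$ is a cocycle --- at which point you can cite Maier for the unitalised algebra, and you have essentially reproduced the paper's argument. One further correction: your claim ``$V(\fk)\cong\R$ for simple $\fk$'' fails for real simple Lie algebras that are not absolutely simple (e.g.\ $\mathfrak{sl}_{n}(\mathbb{C})$ over $\R$), as noted in Section~I of the paper; your argument survives if you carry $V(\fk)$ through rather than replacing it by $\R$.
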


Note that since $C_{c}^{\infty}(M,\fk)$ is not unital we cannot use \cite[Th.\
16]{Maier02Central-extensions-of-topological-current-algebras} directly, as 
claimed in \cite[Cor.\
18]{Maier02Central-extensions-of-topological-current-algebras}.

\begin{proof}
 The combination of \cite[Th.\
 11]{Maier02Central-extensions-of-topological-current-algebras} and
 \cite[Th.\
 16]{Maier02Central-extensions-of-topological-current-algebras} shows
 that
 \begin{equation*}
  \Hom((\Omega{}^{1}_{c}(U,V(\fk))/dC^{\infty}_{c}(U,\fk)),X)\to   
  H_{\op{ct}}^{2}(C_{c}^{\infty}(U,\fk)\rtimes \fk ,X),
  \quad \varphi\mapsto [\varphi \op{\circ}\omega_{U,\fk}]
 \end{equation*}
 is an isomorphism. It remains to be shown that the canonical inclusion
 $i \from C^{\infty}_{c}(U,\fk)\to C^{\infty}_{c}(U,\fk)\rtimes\fk$
 induces an isomorphism $H^{2}_{\op{ct}}(i)$. We first note that we can
 extend each cocycle $\omega$ on $C^{\infty}_{c}(U,\fk)$ to
 $C^{\infty}_{c}(U,\fk)\rtimes\fk$ if we interpret $\fk$ as constant
 functions. In other words, we set $\omega(x,y) = 0$ for 
 $x,y \in \fk$, and
 $\omega(f,x) := \omega(f,\lambda \cdot x)$ for
 $f\in C_{c}^{\infty}(U,\fk)$, $x\in\fk$ and
 $\lambda \in C_{c}^{\infty}(U)$ with $\lambda\equiv 1$ on $\supp(f)$.
 Since $\omega$ is diagonal, this does not depend on the choice of
 $\lambda$. 
The extension is again a cocycle
$\omega(x,[f,g]) + \mathrm{cyclic} = \omega(\lambda \cdot x, [f,g]) + \mathrm{cyclic} = 0$
and 
$\omega(f,[x,y]) + \mathrm{cyclic} = 
\omega(f,[\lambda \cdot x,\lambda' \cdot y]) + \mathrm{cyclic} = 0$
for $x,y \in \fk$, $f,g \in C_{c}^{\infty}(U,\fk)$, and $\lambda$
equal to 1 on $\supp(f)$ (and $\supp(g)$), $\lambda'$ equal to 1 on
$\supp({\lambda})$.
This shows that $H^{2}_{\op{ct}}(i)$ is surjective.

 If $[\omega]\in \ker(H^{2}_{\op{ct}}(i))$, then
 $\omega(f,g)=\lambda([f,g])$ for $f,g\in C_{c}^{\infty}(U,\fk)$ and
 $\lambda\from C_{c}^{\infty}(U,\fk)\to X$ linear and continuous. If we
 extend $\lambda$ by $0$ to $\fk$, then
$\omega' = \omega - \lambda \circ [\,\cdot\, , \,\cdot\,]$ is a cocycle
that vanishes on $C_{c}^{\infty}(U,\fk) \times C_{c}^{\infty}(U,\fk)$.
We have $\omega'([f,g],x) + \omega'([g,x],f) + \omega'([x,f],g) = 0$
and as the last two terms vanish, so does the first. Since $C_{c}^{\infty}(U,\fk)$ 
is perfect, $\omega'$ vanishes on $C_{c}^{\infty}(U,\fk) \times \fk$ and on 
$\fk \times C_{c}^{\infty}(U,\fk)$,
and factors through a cocycle on $\fk\times\fk$, which is a coboundary by
Whitehead's Lemma. Thus $H^{2}_{\op{ct}}(i)$ is also injective.
\end{proof}

Summarising, we have defined a morphism
$\mu \from \cF_{\op{ct}} \to \cS_{\op{ct}}$ from a sheaf to a
monopresheaf that is an isomorphism on sufficiently small subsets $U$ of
$M$, at least in the case that the fibres of $\fK$ are semi-simple.
According to the following standard proposition, the monopresheaf
$\cS_{\op{ct}}$ must then in fact be a sheaf, and the morphism $\mu$ an
isomorphism of sheaves. This means that in particular
$\mu_M \from \cF_{\op{ct}}(M) \to \cS_{\op{ct}}(M)$ is an isomorphism.
In other words, the map $\phi \mapsto [\phi \circ \omega]$ is an
isomorphism
\begin{equation}\label{eqn:proved_isomorphism}
 \Hom_{\op{ct}}(\ol{\Omega}{}^{1}_{c}(M,V(\fK)),X)
 \cong H^{2}_{\op{ct}}(\Gamma_{c}(\fK),X)\,,
\end{equation}
proving Proposition \ref{prop:universal_algebra_cocycle}.

\begin{proposition}\label{prop:local-iso}
 Let $\cF$ be a sheaf, $\cS$ a monopresheaf (i.e.\  a presheaf that
 satisfies the local identity axiom), and let $\mu \from \cF \to \cS$ be
 a morphism of presheaves such that each $x \in M$ has an open
 neighbourhood $V$ such that $\mu_W \from \cF(W) \to \cS(W)$ is an
 isomorphism for any open $W \subseteq V$. Then $\cS$ is a sheaf, and
 $\mu$ an isomorphism.
\end{proposition}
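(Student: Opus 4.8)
The plan is to show that $\mu_U \from \cF(U) \to \cS(U)$ is bijective for \emph{every} open $U \se M$; once that is known, $\cS$ is isomorphic as a presheaf to the sheaf $\cF$ and is therefore itself a sheaf, and $\mu$ is an isomorphism of sheaves. Throughout I would call an open set \emph{good} if $\mu$ restricts to an isomorphism on it and on all of its open subsets. By hypothesis every point of $M$ has a good neighbourhood, so the good sets form a basis for the topology of $M$, they are stable under passing to open subsets, and in particular every open $U$ admits an open cover $\{V_i\}$ by good sets whose pairwise intersections $V_i \cap V_j$ are good as well (being open subsets of $V_i$).

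First I would prove injectivity of $\mu_U$ for arbitrary open $U$. Given $s \in \cF(U)$ with $\mu_U(s) = 0$, choose a good cover $\{V_i\}$ of $U$. Since restriction commutes with the presheaf morphism $\mu$, we get $\mu_{V_i}(s|_{V_i}) = 0$, and as each $\mu_{V_i}$ is injective this forces $s|_{V_i} = 0$ for all $i$. The sheaf $\cF$ is in particular separated, so $s = 0$.

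Next I would prove surjectivity. Given $t \in \cS(U)$, pick a good cover $\{V_i\}$ of $U$ and use the isomorphisms $\mu_{V_i}$ to obtain (unique) $s_i \in \cF(V_i)$ with $\mu_{V_i}(s_i) = t|_{V_i}$. On a good overlap $V_i \cap V_j$, both $s_i|_{V_i \cap V_j}$ and $s_j|_{V_i \cap V_j}$ are mapped to $t|_{V_i \cap V_j}$ by the injective map $\mu_{V_i \cap V_j}$, hence they agree. Since $\cF$ is a sheaf, these local sections glue to some $s \in \cF(U)$ with $s|_{V_i} = s_i$. Then $\mu_U(s)$ and $t$ have the same restriction $\mu_{V_i}(s_i) = t|_{V_i}$ to each member of the cover, so they coincide because $\cS$ satisfies the local identity axiom. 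Thus $\mu_U$ is onto, and combining with the previous step $\mu$ is an isomorphism of presheaves.

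The final step is then immediate: a presheaf isomorphic to a sheaf is a sheaf, so $\cS$ is a sheaf and $\mu$ an isomorphism of sheaves. I do not expect a genuine obstacle here; the only point needing a little care is the bookkeeping that the good sets are closed under open subsets (and hence under finite intersections), so that the gluing and separation arguments can all be run on a good cover. Everything else is the standard ``a local isomorphism of (pre)sheaves is a global isomorphism'' argument, with the single twist that only the source $\cF$ is assumed to be a full sheaf, while the target $\cS$ enters the argument solely through its separation property.
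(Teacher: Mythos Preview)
Your proof is correct and follows essentially the same route as the paper: choose a cover by open sets on which $\mu$ is an isomorphism (and on all their open subsets, hence on overlaps), use injectivity of $\mu_{V_i}$ together with separatedness of $\cF$ for injectivity, and use the sheaf property of $\cF$ for gluing plus the monopresheaf property of $\cS$ for surjectivity. Your explicit remark that the ``good'' sets are stable under open subsets (so overlaps are good) is exactly the point the paper uses implicitly when invoking that $\mu_{V_{ij}}$ is an isomorphism.
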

\begin{proof}
 We show that $\mu_{U} \from \cF(U)\to\cS(U)$ is an isomorphism for
 arbitrary open $U\subseteq M$. Fix an open cover $\{V_i\}_{i \in I}$ of
 $U$ such that $\mu_{W}\from \cF(W)\to\cS(W)$ is an isomorphism for all
 $W \subseteq V_i$. First of all, we show that $\mu_{U}$ is injective.
 Suppose that $\mu_U (f_U) = 0$ in $\cS(U)$. Then certainly
 $\rho_{V_iU}\mu_U (f_U) = \mu_{V_i}\rho_{V_iU} (f_U) =0$ for all
 $i\in I$, and since $\mu_{V_i}$ is an isomorphism we have
 $f_{V_i} := \rho_{V_iU} (f_U) = 0$. But if $f_{V_i}=0$ for all
 $i\in I$, then $f_U$ must be $0$ by the `local identity' axiom for
 $\cF$. Next, we show that $\mu_U$ is surjective. Given
 $s_U \in \cS(U)$, we construct an $f_U \in \cF(U)$ such that
 $\mu_U(f_U) = s_U$. Set $s_{i} := \rho_{V_iU} (s_U)$, so
 $\rho_{V_{ij}V_i}(s_{i}) = \rho_{V_{ij}V_j}(s_j)$ by the presheaf
 property of $\cS$. (We write $V_{ij} = V_i \cap V_j$.) Set
 $f_i := \mu^{-1}_{V_i}(s_i)$ and observe
 $\mu_{V_{ij}}\rho_{V_{ij}V_i}(f_i) = \rho_{V_{ij}V_i}(s_i) = \rho_{V_{ij}V_j}(s_j) = \mu_{V_{ij}}\rho_{V_{ij}V_j}(f_j)$.
 Since $\mu_{V_{ij}}$ is an isomorphism,
 $\rho_{V_{ij}V_i}f_i = \rho_{V_{ij}V_j}f_j$. By the gluing property of
 $\cF$, the $f_i$ then extend to an $f_{U}\in \cF(U)$ with
 $\rho_{V_iU}f_U = f_i$. Since
 $\rho_{V_iU} \mu_{U}(f_U) = \mu_{V_i}\rho_{V_iU}(f_U) = s_i$ for all
 $i \in I$, we must have $\mu_U(f_U) = s_U$ by the `local identity'
 axiom for $\cS$.
\end{proof}

\begin{tabsection}
 Note that the last proposition not only yields Proposition
 \ref{prop:universal_algebra_cocycle} by eqn.\
 \eqref{eqn:proved_isomorphism}, but it also shows that
 $\cS_{\op{ct}}(U) = H^{2}_{\op{ct}}(\Gamma_{c}(\fK|_{U}) , X)$ actually
 constitutes a sheaf.
\end{tabsection}

\begin{remark}
 Another situation where the preceding argument would apply is the case
 $\fk=\frak{pu}(\mathcal{H})$, for $\mathcal{H}$ a separable
 infinite-dimensional Hilbert space. If we endow $PU(\mathcal{H})$ with
 the norm topology, then it becomes an infinite-dimensional Lie group
 with Lie algebra $\frak{pu}(\mathcal{H})$ and equivalence classes of
 smooth principal $PU(\mathcal{H})$-bundles are in bijection with
 $H^{3}(M,\Z)$ for each finite-dimensional manifold $M$
 \cite{MullerWockel07Equivalences-of-Smooth-and-Continuous-Principal-Bundles-with-Infinite-Dimensional-Structure-Group}.
 Picking such a bundle $P\to M$ we consider the associated Lie algebra
 bundle $\fK\to M$. Since $PU(\mathcal{H})$ also acts on the central
 extension $\wh{\fk}=\frak{u}(\mathcal{H})$ we get a central extension
 $\hat{\fK}\to \fK$ of Lie algebra bundles and one of Lie algebras
 \begin{equation*}
  \Gamma_{c}(\hat{\fK}) \to \Gamma_{c}(\fK).
 \end{equation*}
 As above, Proposition \ref{prop:local-iso} would yield the universality
 of this central extension if one knew that this were the case for
 \emph{trivial bundles}, i.e., for the associated current algebras. A
 particularly interesting example would be $M=G$ a simple, 1-connected and
 compact Lie group and
 \begin{equation*}
  [P]\in \op{Bun}(G,PU(\mathcal{H}))\cong H^{3}(G,\Z)\cong \Z
 \end{equation*}
 a generator (cf.\
 \cite{NikolausSachseWockel11A-Smooth-Model-for-the-String-Group}).
\end{remark}

\section{Universality of the Gauge Group Extension}
\label{sect:universality_of_the_gauge_group_extension}

\begin{tabsection}
 In this section, we will use the following Recognition Theorem from
 \cite{Neeb02Universal-central-extensions-of-Lie-groups} in order to
 prove Theorem~\ref{thm:universal_group_extension}.
\end{tabsection}

\begin{theorem}
 Let $Z\to \wh{H}\to H$ be a central extension of Lie groups such that
 $H$ is a connected and locally convex Lie group with perfect Lie algebra $ \fh$, 
 $\fz:=L(Z)$ is Mackey
 complete,
 \begin{enumerate}
  \item the induced Lie algebra extension $\mf{z}\to \wh{\fh}\to \fh$ is
        $\R$-universal and
  \item $\wh{H}$ is simply connected.
 \end{enumerate}
 If the derived extension $\fz\to\wh{\fh}\to\fh$ is universal for a
 Mackey complete space $\mf{a}$, then $\wh{H}$
 is universal for each regular abelian Lie group $A$ with
 $L(A)=\mf{a}$.\vspace{-0.5ex}
\end{theorem}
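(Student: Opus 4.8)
The plan is to reduce the group-level assertion to the assumed Lie-algebra-level universality, using that $\wh{H}$ is simply connected in order to integrate. So let $A\to\wh{G}\xrightarrow{q_G}H$ be an arbitrary central extension of locally convex Lie groups that is a smooth principal bundle, with $\mathfrak a:=L(A)$ Mackey complete, and let $q_H\from\wh{H}\to H$ denote the projection of the given extension $Z\to\wh{H}\to H$. Applying the Lie functor yields a central extension $\mathfrak a\to L(\wh{G})\to\fh$ of locally convex Lie algebras, so by the assumed universality of $\fz\to\wh{\fh}\to\fh$ for the Mackey-complete space $\mathfrak a$ there is a \emph{unique} morphism of locally convex Lie algebras $\psi\from\wh{\fh}\to L(\wh{G})$ over $\mathrm{id}_{\fh}$. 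Everything now reduces to integrating $\psi$ to a morphism $\Psi\from\wh{H}\to\wh{G}$ over $\mathrm{id}_H$ and to checking uniqueness of such a $\Psi$.

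I would carry out the integration via a pullback, which keeps the argument self-contained and, in particular, avoids having to assume that $H$ (or $\wh{G}$) is regular. Form $E:=\wh{G}\times_H\wh{H}$, the pullback of $q_G$ along $q_H$: since $q_G$ is a smooth principal $A$-bundle and $q_H$ is smooth, $E\to\wh{H}$ is again a smooth principal $A$-bundle, and $E$ (the evident closed subgroup of $\wh{G}\times\wh{H}$) is a locally convex Lie group, so $A\to E\to\wh{H}$ is a central extension. Any smooth homomorphic section $\sigma\from\wh{H}\to E$, composed with the projection $E\to\wh{G}$, yields a morphism $\Psi$; the pullback property forces $q_G\circ\Psi=q_H$, and $L(\Psi)$ is then a Lie algebra morphism over $\mathrm{id}_{\fh}$, hence $L(\Psi)=\psi$ by the uniqueness just noted. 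So it suffices to produce $\sigma$.

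Producing $\sigma$ is the heart of the matter, and it is here that simple connectedness of $\wh{H}$ enters. Deriving $E\to\wh{H}$ gives the Lie algebra extension $\mathfrak a\to L(\wh{G})\times_{\fh}\wh{\fh}\to\wh{\fh}$, and this one is \emph{split}: $\hat h\mapsto(\psi(\hat h),\hat h)$ is a continuous Lie algebra section, precisely because $\psi$ lies over $\mathrm{id}_{\fh}$. I would then invoke Neeb's classification of central extensions of a simply connected Lie group: for simply connected $\wh{H}$ a central extension of $\wh{H}$ by $A$ is determined up to equivalence by its Lie algebra extension (the period homomorphism $\pi_2(\wh{H})\to\mathfrak a$ of $E$ vanishes, since its Lie algebra cocycle is a coboundary and Stokes' theorem applies, and there is no $\pi_1$-contribution). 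As the Lie algebra extension of $E$ is trivial, $E\to\wh{H}$ is the trivial extension and hence splits; Mackey completeness of $\mathfrak a$ and regularity of $A$ are exactly what make this machinery applicable.

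It remains to assemble the square and check uniqueness. Commutativity is $q_G\circ\Psi=q_H$, already observed, so $\Psi$ maps $Z=\ker q_H$ into $A=\ker q_G$, which provides the map on kernels required by the universal property. If $\Psi'$ is another morphism over $\mathrm{id}_H$, then $L(\Psi')$ is again a Lie algebra morphism over $\mathrm{id}_{\fh}$, so $L(\Psi')=\psi=L(\Psi)$ by the Lie-algebra universality; since $\wh{H}$ is connected, a morphism out of it is determined by its differential, whence $\Psi'=\Psi$. (Perfectness of $\fh$ together with the $\R$-universality hypothesis are what ensure that the weak and the genuine notions of universality agree in this setting, so that comparing against $\fz\to\wh{\fh}\to\fh$ is legitimate.) I expect the genuine obstacle to be precisely the integration step of the third paragraph: in the locally convex category the exponential map need not be a local diffeomorphism, so upgrading the split Lie algebra extension of the simply connected group $\wh{H}$ to a split Lie group extension is not formal and rests on the integrability results of \cite{Neeb02Central-extensions-of-infinite-dimensional-Lie-groups}, whose hypotheses are tailored to ``$\wh{H}$ simply connected'', ``$A$ regular'' and ``$\mathfrak a$ Mackey complete''.
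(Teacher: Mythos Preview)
Your argument is sound, but it takes a different route from the paper. The paper does not reprove the result at all: it simply observes that this is Neeb's Recognition Theorem \cite[Th.~4.13]{Neeb02Universal-central-extensions-of-Lie-groups} specialised to the case of perfect $\fh$, noting that the extra hypothesis $\pi_1(H)\subseteq D(\wt{H})$ of that theorem is automatic here (since $D(\wt{H})=\wt{H}$ when $\fh$ is perfect), and that \cite[Lem.~4.5]{Neeb02Universal-central-extensions-of-Lie-groups} lets one pass freely between weak and genuine universality. What you do instead is unpack the mechanism behind Neeb's theorem: pull back the given extension along $q_H$ to obtain a central extension of the simply connected group $\wh{H}$, observe that its derived Lie algebra extension splits via $\psi$, and then invoke the classification of central extensions of simply connected groups from \cite{Neeb02Central-extensions-of-infinite-dimensional-Lie-groups} to split it at the group level. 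This is a legitimate and more transparent argument, and your identification of the integration step as the only non-formal point is exactly right; but note that you are still leaning on the same machinery (the vanishing of the period map for exact cocycles and the $\pi_1$-classification of flat extensions), so your proof is not more elementary than the paper's citation, only more explicit about where the work happens. One small point worth tightening: when you conclude $\Psi=\Psi'$ from $L(\Psi)=L(\Psi')$, the clean way is to observe that $h\mapsto\Psi(h)\Psi'(h)^{-1}$ is a smooth homomorphism $\wh{H}\to A$ (by centrality of $A$) with vanishing differential, hence constant on the connected group $\wh{H}$.
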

\begin{proof}
 This is \cite[Th.\
 4.13]{Neeb02Universal-central-extensions-of-Lie-groups}, boiled-down to
 the case that $\fh$ is perfect. The condition $\pi_{1}(H)\se D(\wt{H})$
 of \cite[Th.\ 4.13]{Neeb02Universal-central-extensions-of-Lie-groups}
 is automatically satisfied since in this case $D(\wt{H})$ coincides
 with the universal covering group of $H$ by definition. Moreover, by
 \cite[Lem.\ 4.5]{Neeb02Universal-central-extensions-of-Lie-groups}
 universality and weakly universality of Lie algebra extensions are
 equivalent. The conclusion of \cite[Th.\
 4.13]{Neeb02Universal-central-extensions-of-Lie-groups} can be
 strengthened to yield the universality of $Z\to \wh{H}\to H$ by
 \cite[Lem.\ 4.5]{Neeb02Universal-central-extensions-of-Lie-groups}.
\end{proof}
\begin{proof}
 (of Theorem \ref{thm:universal_group_extension}) Proposition
 \ref{prop:universal_algebra_cocycle} and \cite[Lem.\
 1.12]{Neeb02Universal-central-extensions-of-Lie-groups} show that the
 induced Lie algebra extension \eqref{eqn:universal_algebra_cocycle} is
 universal (even in the stronger sense of
 \cite{Neeb02Universal-central-extensions-of-Lie-groups}). By the
 Recognition Theorem it thus remains to check that
 $\wh{\Gamma(\cK)_{0}}$ is simply connected. We first note that by
 \cite[Rem.\
 7.14]{Neeb02Central-extensions-of-infinite-dimensional-Lie-groups} we
 have
 $Z\cong \ol{\Omega}{}^{1}_{c}(M,V(\fK))/\Pi_{\omega}\times \pi_{1}(\Gamma(\cK)_{0})$.
 If we consider the long exact homotopy sequence
 \begin{equation*}
  \pi_{2}(\Gamma(\cK)_{0})\xrightarrow{\delta_{2}} \pi_{1}(Z)\to \pi_{1}(\wh{\Gamma(\cK)_{0}})\to \pi_{1}({\Gamma(\cK)_{0}})\xrightarrow{\delta_{1}} \pi_{0}(Z),
 \end{equation*}
 then $\delta_{2}$ is surjective by \cite[Prop.\
 5.11]{Neeb02Central-extensions-of-infinite-dimensional-Lie-groups} and
 $\delta_{1}$ is an isomorphism by construction. Thus
 $\wh{\Gamma(\cK)_{0}}$ is simply connected.
\end{proof}

\section*{Acknowledgements}

\begin{tabsection}
 B.J.\ would like to thank Jeroen Sijsling for some illuminating
 comments, and we also thank Karl-Hermann Neeb for various useful comments 
on an earlier version of the manuscript.
 Parts of the work on this paper were carried out while B.J.\
 enjoyed a fellowship from the Collaborative Research Center SFB 676
 \emph{Particles, Strings, and the Early Universe}.
\end{tabsection}

\def\polhk#1{\setbox0=\hbox{#1}{\ooalign{\hidewidth
  \lower1.5ex\hbox{`}\hidewidth\crcr\unhbox0}}} \def\cprime{$'$}

{}

\end{document}